\documentclass[letterpaper, conference]{ieeeconf}
\IEEEoverridecommandlockouts
\usepackage{masih-ieee}
\usepackage{titlesec}

\newcommand{\aug}{\text{aug}}
\newcommand{\Taug}{\Tc^{\aug}}
\newcommand{\Kaug}{\Kc^{\aug}}
\newcommand{\Faug}{\Fc^{\aug}}
\newcommand{\bestapprox}[1]{\mathrel{\underset{#1}{\overset{\scriptscriptstyle\mathrm{best}}{\approx}}}}
\newcommand{\RRMSE}{\operatorname{RRMSE}}

\renewcommand{\baselinestretch}{.972}

\begin{document}

\title{Control Forward-Backward Consistency: Quantifying the Accuracy of Koopman Control Family Models}
\author{Masih Haseli \qquad Jorge Cort\'es \qquad Joel W. Burdick
		\thanks{M. Haseli and J. W. Burdick  are with the Division of Engineering and Applied Science, California Institute of Technology, Pasadena, CA 91125, USA, {\tt\small\{mhaseli, jburdick\}@caltech.edu}. J. Cort\'es is with the Department of Mechanical and Aerospace Engineering, University of California, San Diego, CA 92093, USA, {\tt\small cortes@ucsd.edu}.}
        \thanks{The work of M. Haseli and J. Burdick was supported by the Technology Innovation Institute of Abu Dhabi and DARPA under the LINC program. The work of J. Cort\'es was supported by ONR Award N00014-23-1-2353.}
}
\maketitle

\begin{abstract}
This paper extends the forward-backward consistency index, originally introduced in Koopman modeling of systems without input, to the setting of control systems, providing a closed-form computable measure of accuracy for data-driven models associated with the Koopman Control Family (KCF). Building on a forward-backward regression perspective, we introduce the control forward-backward consistency matrix and demonstrate that it possesses several favorable properties.
Our main result establishes that the relative root-mean-square error of KCF function predictors is strictly bounded by the square root of the control consistency index, defined as the maximum eigenvalue of the consistency matrix. This provides a sharp, closed-form computable error bound for finite-dimensional KCF models. 
We further specialize this bound to the widely used lifted linear and bilinear models.
We also discuss how the control consistency index can be incorporated into optimization-based modeling  and illustrate the methodology via simulations. 
\end{abstract}

\vskip -0.1 true in
\section{Introduction}
Koopman-based methods have become 
popular in controls and robotics, with many applications relying on finite-dimensional Koopman-inspired models. Since the quality of the resulting closed-loop behavior directly depends on the accuracy of the underlying model, measuring this accuracy is of utmost importance. 
A proper accuracy measure is essential not only for verification but also for reliable construction of data-driven models (e.g., as a loss function). 
This paper develops such a measure for control systems.

\emph{Literature Review:}
The Koopman operator associates a dynamical system with a linear operator on a function space~\cite{BOK:31}, capturing how functions evolve under the dynamics. 
Although the original Koopman operator definition does not account for system inputs, there are numerous successful use cases of Koopman-based methods in controls, e.g. optimal~\cite{AS-AM-DE:18}, robust~\cite{RS-MS-KW-JB-FA:25}, and safety-critical control~\cite{CF-YC-ADA-JWB:20}.

Many prior works do not rely on a formal extension of the Koopman operator to control systems; instead, they rely on ``lifted'' models inspired by the finite-dimensional form of Koopman operators for systems without input~\cite[Section~1.4]{AM-YS-IM:20}. Lifted linear forms~\cite{JLP-SLB-JNK:16} are widely used for their compatibility with linear control methods. However, such forms do not capture input nonlinearities or the input-state coupling. 
The work~\cite{XS-MH-JC-YZ:26-tac} provides necessary and sufficient conditions on the system dynamics to admit a finite-dimensional lifted linear form; while \cite{DU-KD:25} studies and tackles the limitations of lifted linear models via oblique projections.
Bilinear forms are another widely used Koopman-based surrogate model, often associated with control-affine systems. 
The work~\cite{SP-SEO-CWR:20} studies the Koopman generators associated with control-affine systems and bilinear models while~\cite{DG-DAP:21} studies a similar problem through the lens of geometric control.  We refer the reader to~\cite{RS-KW-IM-JB-MS-FA:26} for a review of Koopman-based control and error bounds, especially for lifted linear and bilinear forms.  
General non-affine control systems can be handled in a Koopman framework by setting the control input to be a constant, whereupon the system admits a traditional Koopman operator.
Thus control inputs can be handled by switching between the Koopman operators for finitely many constant-input systems. This idea has been used numerous times, for purposes such as control of PDEs~\cite{SP-SK:19} and monotone systems~\cite{AS-AM-DE:18}, and sampling-based control design~\cite{JH-KC:24-lcss}.
 
While the bulk of the literature studied Koopman-based surrogate models,  rigorous extensions of Koopman theory to general (not necessarily control affine) nonlinear  control systems have received less attention.  \cite{MK-IM-automatica:18} builds a rigorous 
extension of Koopman operator theory to general discrete-time control systems by considering all possible behaviors arising from infinite-input sequences. Alternatively, \cite{MH-JC:26-auto} introduces the {\em Koopman Control Family} (KCF) formalism, using all possible Koopman-operators associated with all constant-input systems. Later, it was shown in~\cite{MH-IM-JC:25-tac} that under certain conditions on the function spaces, the infinite-input sequence paradigm~\cite{MK-IM-automatica:18} is in fact equivalent to KCF~\cite{MH-JC:26-auto}, although the theoretical structures look very different. Alternatively,~\cite{ML:25} provides a different extension of the Koopman operator to control systems based on product spaces.

This paper studies the accuracy of finite-dimensional forms associated with the KCF framework in data-driven settings. We extend the forward-backward consistency index from~\cite{MH-JC:23-csl}, originally defined for systems without input, to control systems.  The notion of forward and 
backward dynamics has been used numerous times in the literature, including to deal with noise~\cite{STMD-MSH-MOW-CWR:16,LL-SD-JRF:26}, forecasting~\cite{OA-NBE-VL-MM:20}, and identifying close to Koopman-invariant subspaces~\cite{MH-JC:25-access}, among many other applications.

\emph{Statement of Contributions:}
We extend the \emph{consistency index} used for Koopman-based modeling of systems without input~\cite{MH-JC:23-csl} to control systems, and thereby develop an accuracy measure for input-state separable models associated with the Koopman Control Family (KCF) for data-driven settings. Adopting a forward-backward regression perspective, we introduce the \emph{control forward-backward consistency matrix} (CFBCM). We show that this matrix has several desirable properties: (i) it is similar to a positive-semidefinite matrix; (ii) its eigenvalues are real and belong to the interval $[0,1]$, and (iii) these eigenvalues do not depend on the choice of basis. 
This invariance of the spectrum under the choice of basis is quite meaningful since function predictors in the KCF are also independent of the choice of basis.
Our main result establishes that the relative root-mean-square error of KCF function predictors admits a sharp upper bound determined by the square root of the largest CFBCM eigenvalue (which we define as the \emph{control consistency index}), leading to 
a closed-form error bound. We also show how this bound can be specialized to the widely used lifted linear and bilinear subclasses of input-state separable models. Finally, we show how our results can be used in optimization-based modeling and provide guidelines for improving training efficiency\footnote{We denote the sets of natural, non-negative integer, real, and complex numbers by $\naturals$, $\naturals_0$, $\real$, and $\cplx$.  
For matrix $A \in \cplx^{m \times n}$, we use $A^T$, $A^*$, $\|A\|_F$, $A^\dagger$, $\row(A)$, and $\text{cond}(A)$  to denote its transpose, conjugate transpose, Frobenius norm, pseudoinverse, row space (the vector space spanned by rows), and condition number. If $A$ is square, $\tr(A)$, $A^{-1}$, and $\spec(A)$ denote its trace, inverse, and spectrum. If $\spec(A) \subset \real$, $\lambda_{\max}(A)$ denotes its maximum eigenvalue. If $A$ is positive semidefinite, $A^{\frac{1}{2}}$ denotes its square root. For matrices $A$ and $B$ with the same number of rows, $[A,B]$ denotes the side-by-side concatenation of $A$ and $B$. $0_{m \times n}$ and $I_m$ represent the $m \times n$ zero matrix and $m \times m$ identity matrix respectively (indices are omitted when clear). For $v \in \cplx^n$, $\| v \|$ denotes its $2$-norm. For vector space $\Vc$, $\dim(\Vc)$ denotes its dimension. For set $S \subseteq \Vc$, $\Span(S) \subseteq \Vc$ denotes the vector space spanned by $S$. For function $\Psi: \Xc \to \cplx^n$, $\Span(\Psi)$ denotes the vector space spanned by its elements. Given
 $f: A \to B$ and $g: B \to C$, $g \circ f: A \to C$ denotes their composition. For sets $B \subseteq A$ where $A$ is equipped with a metric $d$, we say $y$ is the best approximation of $x \in A$ on $B$ denoted by $x \bestapprox{B} y$ if $y \in \arg \min_{z \in B} d(x,z)$.}.

\section{A primer on Koopman Control Family}
Here, we provide a few necessary definitions and results from the  Koopman Control Family (KCF) framework following~\cite{MH-JC:26-auto} that we use throughout the paper.
Consider the (not necessarily control-affine) discrete-time dynamical system
\begin{align}\label{eq:control-system}
x^+ = \Tc(x, u), \quad x \in \Xc \subseteq \real^n, \; u \in \Uc \subseteq \real^m.
\end{align}
By setting input $u$ to be a constant, one can use~\eqref{eq:control-system} to create a family of systems without input
\begin{align}\label{eq:constant-input-systems}
\{x^+ = \Tc_{u^*}(x):= \Tc(x, u \equiv u^*)\}_{u^* \in \Uc}.
\end{align}
Given that the family of systems~\eqref{eq:constant-input-systems} do not have inputs, the traditional definition of the Koopman operator can be applied to the family, leading to the definition of the KCF. 
\begin{definition}\longthmtitle{Koopman Control Family~\cite[Definition~4.1]{MH-JC:26-auto}}\label{def:kcf}
Consider $\Fc$, a vector space of complex-valued functions with domain $\Xc$ that is closed under composition with elements of the family~\eqref{eq:constant-input-systems}, i.e., $f \circ \Tc_{u^*} \in \Fc$, for all $\map{f}{\Xc}{\cplx}$ in $\Fc$ and all $u^* \in \Uc$. The Koopman Control Family (KCF) is defined as $\{\Kc_{u^*} : \Fc \to \Fc \}_{u^* \in \Uc}$, where
\begin{align*}
\Kc_{u^*}f = f \circ \Tc_{u^*}, \quad \forall f \in \Fc, \; \forall u^* \in \Uc. 
\eqoprocend
\end{align*}
\end{definition}
\smallskip
The KCF can fully encode the trajectories of~\eqref{eq:control-system} in the evolution of functions in $\Fc$:  given trajectory $\{x_k\}_{k \in \naturals_0}$ from the initial condition $x_0$ and input sequence $\{u_k\}_{k \in \naturals_0}$,
\begin{align}\label{eq:trajectory-encoding}
f(x_k) = \left[ \Kc_{u_0} \cdots \Kc_{u_{k-1}} f \right](x_0), \quad \forall f \in \Fc, \; \forall k \in \naturals.
\end{align}

\subsection{Parameterizing KCF via Augmented Koopman Operator}
Since the KCF may contain uncountably many operators, utilizing it in an efficient manner requires an effective parameterization.  
To this end, we first parameterize the family of constant-input systems~\eqref{eq:constant-input-systems} via the following system that acts on the augmented state space $\Xc \times \Uc$
\begin{align}\label{eq:Taug}
(x, u)^+ = \Taug(x,u) := (\Tc(x, u), u).
\end{align}
Note that the augmented system $\Taug$ in~\eqref{eq:Taug} does not have an input, and $u$ is a part of the state. System~\eqref{eq:Taug} completely captures the family of constant input systems~\cite[Lemma~5.1]{MH-JC:26-auto}.
Given that~\eqref{eq:Taug} is a system without input, we can define a Koopman operator $\Kaug: \Faug \to \Faug$ as
\begin{align}\label{eq:Kaug}
\Kaug g = g \circ \Taug, \quad \forall g \in \Faug,
\end{align}
where $\Faug$ is an appropriate function space closed under composition with $\Taug$. Interestingly, $\Kaug$ can parametrize the members of the KCF given mild conditions on function spaces $\Faug$ and $\Fc$: see ~\cite[Definition~5.4]{MH-JC:26-auto}.  The most important condition, which we use frequently, is $f \circ \Tc \in \Faug$ for all $f \in \Fc$. We refer the reader to the discussion after~\cite[Definition~5.4]{MH-JC:26-auto} for examples for choices of $\Fc$ and $\Faug$.

\subsection{Finite-Dimensional Forms of KCF}
For computational reasons, one naturally seeks finite-dimensional approximations to the KCF. Finite-dimensional models arise from the principle of subspace invariance: if a model’s input and output are to lie in a given finite-dimensional subspace, that subspace must be invariant under \emph{the action of the model}. For dynamical systems without input, finite-dimensional Koopman-based models follow the same principle: their form is dictated by subspaces that are invariant under the Koopman operator, 
leading to the well-known lifted linear model.  The same form is used for non-invariant subspaces  
through \emph{approximations}\footnote{These approximations often involve composing projection operators with the Koopman operator to generate an approximate operator (this is known as compression) that renders the underlying subspace invariant~\cite[Section~1.4]{AM-YS-IM:20}.}. 

To obtain a finite-dimensional form for the KCF using the invariance principle, let $H: \Xc \to \cplx^{n_\Hc}$ be such that its elements form a basis for a KCF's common invariant subspace, denoted by $\Hc \subseteq \Fc$. The finite-dimensional form of the KCF, termed the ``input-state separable'' model, is 
\begin{align}\label{eq:input-state-separable-function-form}
H \circ \Tc  = \Ac \, H,
\end{align}
where $\Ac: \Uc \to \cplx^{n_{\Hc} \times n_{\Hc}}$ is a matrix-valued function.
Evaluating~\eqref{eq:input-state-separable-function-form} on a pair $(x,u) \in \Xc \times \Uc$ gives $H(x^+) = H \circ \Tc (x) = A(u) H(x)$, leading to the following point-wise dynamic predictor\footnote{Since Koopman operators act on functions; Eq.~\eqref{eq:input-state-separable-function-form} is a fundamental notion, while its point-wise evaluation~\eqref{eq:input-state-separable-pointwise} is simply a useful byproduct. Later, for the case of non-invariant subspaces, we study \emph{function} approximations. 
}
\begin{align}\label{eq:input-state-separable-pointwise}
z^+ = \Ac(u) z, \quad z \in \cplx^{n_\Hc},  u \in \Uc.
\end{align}
Given trajectory $\{x_k\}_{k \in \naturals_0}$ generated from the initial condition $x_0$ and input sequence $\{u_k\}_{k \in \naturals_0}$, if we initialize~\eqref{eq:input-state-separable-pointwise} with $z_0= H(x_0)$, we have $z_k = H(x_k)$ for all $k \in \naturals_0$.

\begin{remark}\longthmtitle{Generality of Input-State Separable Models}\label{r:generality}
The input-state separable form (\ref{eq:input-state-separable-pointwise}) is the general finite-dimensional form of the KCF following~\cite[Theorem~4.3]{MH-JC:26-auto}.   
The lifted linear, bilinear, and switched linear models that are widely used in the Koopman literature are special cases of input-state separable models~\cite[Lemmas~4.5--4.6]{MH-JC:26-auto}.
\oprocend
\end{remark}

Eq.~\eqref{eq:input-state-separable-pointwise} is linear in the ``lifted state'' $z$, but is generally nonlinear in $u$. This nonlinearity in the input is unavoidable whenever system~\eqref{eq:control-system} is nonlinear in the input.

Finite-dimensional invariant subspaces containing the desired functions may not be easy 
to identify, or even exist. In such cases, we use an approximate form of~\eqref{eq:input-state-separable-function-form}
\begin{align}\label{eq:input-state-separable-function-form-approximate}
H \circ \Tc  \approx  \Ac \, H. 
\end{align}
To approximate the action of functions on trajectories, one still uses~\eqref{eq:input-state-separable-pointwise} with $\Ac$ in~\eqref{eq:input-state-separable-function-form-approximate}; however,  the action of $H$ on trajectories of~\eqref{eq:control-system} deviates from the trajectories of~\eqref{eq:input-state-separable-pointwise}, and this deviation depends on the quality of approximation in~\eqref{eq:input-state-separable-function-form-approximate}.  It is important to note that~\eqref{eq:input-state-separable-function-form-approximate} provides a predictor not only for the evolution of elements of $H$, but also for all elements of $\Hc = \Span(H)$. Indeed, given $h \in \Hc$ with representation $h = v_h^* H$ for some $v_h \in \cplx^{n_\Hc}$, the approximation of $h \circ \Tc$ induced by~\eqref{eq:input-state-separable-function-form-approximate}, denoted by $\Pf_{h \circ \Tc}$, is given by
\begin{align}\label{eq:KCF-function-predictor}
	h \circ \Tc \approx \Pf_{h \circ \Tc} := v_h^* \Ac H.
\end{align}
The approximations in~\eqref{eq:input-state-separable-function-form-approximate}--\eqref{eq:KCF-function-predictor} are obtained via projections, defined according to the structure of $\Fc$ and $\Faug$.

Next, we describe how $\Kaug$ can be used to construct \emph{optimal} predictors  when $\Faug$ is endowed with an inner product
$\innerprod{\cdot}{\cdot}$ inducing a norm $\| \cdot \|$. To this end, we rely on the following subspaces.

\begin{definition}\longthmtitle{Non-degenerate Normal Subspaces in $\Faug$ and Their Bases~\cite[Definition~7.1]{MH-JC:26-auto}}\label{def:normal-space}
Let the elements of $\Psi: \Xc \times \Uc \to \cplx^{n_{\Psi}}$ be linearly independent. We say $\Psi$ is in \textbf{(non-degenerate)
normal separable form} (or normal form for short) if it can be decomposed as 
\begin{align}\label{eq:normal-basis}
\Psi&=\begin{bmatrix} I_{\Uc}\\ G\end{bmatrix}H
\end{align}
where  $H: \Xc \to \cplx^{n_{\Hc}}$, $G: \Uc \to \cplx^{(n_{\Psi}-n_{\Hc}) \times n_{\Hc}}$ and function $I_{\Uc}: \Uc \to \cplx^{n_{\Hc} \times n_{\Hc}}$ returns the identity matrix,  $I_{\Uc}(u) = I$ for all $u \in \Uc$.  We say that $\Sc \subset \Kaug$ is a \textbf{normal subspace} if its basis can be written in normal form\footnote{ In~\eqref{eq:normal-basis}, $I_{\Uc}$ and $G$ are matrix-valued functions with domain $\Uc$ and $H$ is a vector-valued function with domain $\Xc$. The matrix products are defined in an element-wise sense similar to  numerical matrix and vector products.}. \oprocend
\end{definition}
\smallskip

The next result connects the finite-dimensional approximations of $\Kaug$ on $\Sc = \Span(\Psi) \subseteq \Faug$ to the input-state separable forms on $\Hc = \Span(H) \subseteq \Fc$.

\begin{theorem}\longthmtitle{Optimal Approximation of Input-State Separable Forms from $\Kaug$~\cite[Theorem~8.7]{MH-JC:26-auto}}\label{t:optimal-approx-Kaug}
Given $\Psi$ and $H$ in~\eqref{eq:normal-basis}, let $\Sc = \Span(\Psi) \subseteq \Faug$, $\Hc = \Span(H)$, and let $\Pc_{\Sc}: \Faug \to \Faug$ be the orthogonal projection operator on $\Sc$. Let $A \in \cplx^{n_{\Psi} \times n_{\Psi}}$ be a matrix such that $\Pc_{\Sc} \Kaug \Psi = A \Psi$ (this matrix exists since $\Sc$ is always invariant under $\Pc_{\Sc} \Kaug$). Decompose $A$ in blocks as follows
\begin{align}\label{eq:A-Kaug}
A = \begin{bmatrix}
A_{11} & A_{12}
\\
A_{21} & A_{22} 
\end{bmatrix},
\end{align}
where $A_{11} \in \cplx^{n_{\Hc} \times n_{\Hc}}$. Then, 
\begin{enumerate}
\item for all $h = v_{h}^* H \in \Hc$, with $v_h \in \cplx^{n_{\Hc}}$, define
\begin{align}\label{eq:best-KCF-predictor}
\Pf_{h \circ \Tc}:=  v_{h}^* (A_{11} I_{\Uc} + A_{12} G) H \in \Sc. 
\end{align}
Then, $\Pf_{h \circ \Tc}$ is the best approximation (orthogonal projection) of $h \circ \Tc$ on $\Sc$, i.e., $h \circ \Tc \bestapprox{\Sc} \Pf_{h \circ \Tc}$.
\item $H \circ \Tc \bestapprox{\Sc} (A_{11} I_{\Uc} + A_{12} G) \; H$, where $\bestapprox{\Sc}$ is defined element-wise. \oprocend
\end{enumerate}
\end{theorem}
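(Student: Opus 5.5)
The plan is to reduce both claims to a single structural fact: the action of $\Kaug$ on the first block $H$ of $\Psi$ is exactly composition with $\Tc$. We then read off the projection coefficients from the first block rows of $A$.

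\emph{Key identity.} Since the functions in $H$ depend only on $x$, for every $(x,u)$ we have
\[
(\Kaug H)(x,u) = H(\Taug(x,u)) = H(\Tc(x,u)) = (H\circ\Tc)(x,u).
\]
Hence $\Kaug H = H\circ\Tc$ as elements of $\Faug$; membership is guaranteed by the standing condition $f\circ\Tc\in\Faug$ for $f\in\Fc$. Write $\Psi = [H^T, (GH)^T]^T$ and split $\Pc_{\Sc}\Kaug\Psi = A\Psi$ by block rows. The first block row gives
\[
\Pc_{\Sc}(H\circ\Tc) = A_{11}H + A_{12}\,GH
\]
element-wise. Because $I_{\Uc}(u)=I$, the right-hand side equals $(A_{11}I_{\Uc} + A_{12}G)H$, understood with the element-wise products of Definition~\ref{def:normal-space}. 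One point to verify here is that $\Pc_{\Sc}$ acts element-wise on vectors of functions, so that projecting $\Kaug\Psi$ row by row is legitimate. Another is that $A$ is unique, which holds by linear independence of $\Psi$, so that the block reading is unambiguous. This already proves part 2, since the orthogonal projection onto $\Sc$ is the best approximation in the inner-product norm, i.e., $H\circ\Tc \bestapprox{\Sc} \Pc_{\Sc}(H\circ\Tc)$ element-wise.

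\emph{Part 1.} For $h = v_h^*H$, composition is linear, so $h\circ\Tc = v_h^*(H\circ\Tc)$. Orthogonal projection is linear as well, so
\[
\Pc_{\Sc}(h\circ\Tc) = v_h^*\Pc_{\Sc}(H\circ\Tc) = v_h^*(A_{11}I_{\Uc} + A_{12}G)H = \Pf_{h\circ\Tc}.
\]
This expression lies in $\Sc$, being a combination of entries of $H$ and $GH$, both of which are components of $\Psi$. By the projection theorem in an inner-product space, an orthogonal projection onto the finite-dimensional (hence closed) subspace $\Sc$ exists and minimizes the distance. So $h\circ\Tc \bestapprox{\Sc} \Pf_{h\circ\Tc}$.

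The main, if mild, obstacle is justifying $\Kaug H = H\circ\Tc$ rigorously. This requires care in viewing $H$, a function on $\Xc$, as an element of $\Faug$ via the trivial extension $(x,u)\mapsto H(x)$. It also requires keeping the matrix-valued product $A_{12}G$ consistent, i.e., interpreting $(A_{12}G(u))H(x)$ pointwise so that it matches the block $A_{12}(GH)$. Once these identifications are made explicit, the rest is linearity of composition and projection.
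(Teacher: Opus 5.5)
Your proof is correct. This paper does not prove the theorem itself; it quotes it from \cite[Theorem~8.7]{MH-JC:26-auto}. Your argument is the natural one behind it: the identity $\Kaug(I_{\Uc}H) = H\circ\Tc$, reading off the first block row of $\Pc_{\Sc}\Kaug\Psi = A\Psi$, and linearity of $\Pc_{\Sc}$. Its data-driven counterpart is Lemma~\ref{l:A-top}, where $[A_{11},A_{12}]$ is exactly the regression of $H(X^+)$ on $\Psi(X,U)$.

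Two minor points. First, existence of $\Pc_{\Sc}$ follows because $\Sc$ is finite-dimensional, hence complete; no completeness of $\Faug$ or ``closedness'' argument is needed, as the paper's footnote after Lemma~\ref{l:predictor-does-not-depend-on-basis} also notes. Second, uniqueness of $A$ is not needed, since only the product $A\Psi$ enters your block reading.
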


Theorem~\ref{t:optimal-approx-Kaug} provides a practical way to find best approximations of input-state separable models for KCF via approximations of the action of a single linear operator.

The predictors $\Pf_{h \circ \Tc}$ in Theorem~\ref{t:optimal-approx-Kaug} only depend on the choice of normal space $\Sc$ and not on the choice of normal basis $\Psi$, as detailed below. 

\begin{lemma}\longthmtitle{Invariance of Function Predictors in Theorem~\ref{t:optimal-approx-Kaug} under choice of Normal Basis}\label{l:predictor-does-not-depend-on-basis}
Let $\Pf_{h \circ \Tc}$ and $\bar{\Pf}_{h \circ \Tc}$ be predictors of $h \circ \Tc$ with respect to normal bases $\Psi$ and $\bar{\Psi}$ with $\Sc = \Span(\Psi) = \Span(\bar{\Psi})$. Then, $\Pf_{h \circ \Tc} = \bar{\Pf}_{h \circ \Tc}$. \oprocend
\end{lemma}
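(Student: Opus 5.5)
The plan is to use part 1 of Theorem~\ref{t:optimal-approx-Kaug} together with the uniqueness of orthogonal projections onto finite-dimensional subspaces of an inner product space. Theorem~\ref{t:optimal-approx-Kaug} says that, for any normal basis $\Psi$ of $\Sc$, the predictor $\Pf_{h \circ \Tc}$ is the orthogonal projection $\Pc_{\Sc}(h \circ \Tc)$. This object is defined only in terms of the function $h \circ \Tc \in \Faug$ and the subspace $\Sc$. The representation of $\Sc$ therefore cannot matter, provided $h$ is a legitimate input for both predictors.

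\textbf{Step 1: $\Span(H)=\Span(\bar H)$.} Write $\Psi = [I_\Uc; G] H$ and $\bar\Psi = [I_\Uc; \bar G]\bar H$. Since the elements of $\bar H$ lie in $\Sc = \Span(\Psi)$, there exist constant matrices $M_1, M_2$ with
\[
\bar H(x) = (M_1 + M_2 G(u)) H(x) \quad \text{for all } (x,u) \in \Xc \times \Uc .
\]
The left-hand side does not depend on $u$, so we may fix any $u_0 \in \Uc$. This gives $\bar H = (M_1 + M_2 G(u_0)) H$ as functions on $\Xc$, hence $\Span(\bar H) \subseteq \Span(H)$. The symmetric argument gives the reverse inclusion. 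Consequently, every $h \in \Hc$ admits representations $h = v_h^* H = \bar v_h^* \bar H$, and both predictors in the statement are defined for the same $h$.

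\textbf{Step 2: uniqueness of the projection.} Apply Theorem~\ref{t:optimal-approx-Kaug}(1) once with the basis $\Psi$ and once with $\bar\Psi$. This gives $h\circ\Tc \bestapprox{\Sc} \Pf_{h\circ\Tc}$ and $h\circ\Tc \bestapprox{\Sc} \bar\Pf_{h\circ\Tc}$, both in the sense of orthogonal projection onto $\Sc$. Since $\Sc$ is finite-dimensional, it is closed, and the best approximation in an inner product space is unique. Indeed, if $p$ and $q$ are both minimizers, then $h\circ\Tc - p$ and $h\circ\Tc - q$ are both orthogonal to $\Sc$. Hence $p - q \in \Sc \cap \Sc^\perp = \{0\}$, and so $\Pf_{h\circ\Tc} = \bar\Pf_{h\circ\Tc}$.

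The only potentially delicate point is Step 1, namely making sure the two normal bases induce the same state subspace $\Hc$, so the statement compares predictors for the same $h$. The $u$-independence trick above handles it. Step 2 is routine once Theorem~\ref{t:optimal-approx-Kaug} is invoked.
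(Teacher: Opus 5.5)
Your proposal is correct and takes the same route as the paper, which proves the lemma in one line. Both predictors are the orthogonal projection of $h\circ\Tc$ onto $\Sc$ by Theorem~\ref{t:optimal-approx-Kaug}, and orthogonal projections are unique (the paper adds a footnote that completeness is not needed), so your Step~1 check that $\Span(H)=\Span(\bar H)$ is a valid detail the paper leaves implicit; it also follows from $\bar H = R_{11}H$ in Lemma~\ref{l:change-normal-bases}.
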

\smallskip
The proof follows from the fact that $\Pf_{h \circ \Tc}$ and $\bar{\Pf}_{h \circ \Tc}$ are orthogonal projections of $h \circ \Tc$ on $\Sc$; hence, the result trivially follows from the uniqueness of orthogonal projections\footnote{Note that even if the inner-product space is not complete, the uniqueness here still holds given that all elements of the problem can be contained in a finite-dimensional subspace isomorphic to $\cplx^d$ for some $d \in \naturals$.}.

\section{Data-Driven Setup and Problem Statement}\label{sec:problem-setup}
Koopman operator-based methods are widely used in data-driven settings since they utilize the computationally and theoretically attractive regular structure of linear maps acting on vector spaces. 
With this motivation, we study the accuracy of optimal approximate models in Theorem~\ref{t:optimal-approx-Kaug} in a data-driven setting. To achieve this goal, we first provide a proper data-driven setup and then introduce our main problem.

\subsection{Data-Driven Setup}\label{sec:data}
To apply Theorem~\ref{t:optimal-approx-Kaug}, we need to provide a notion of data from system~\eqref{eq:control-system}, explain how one can evaluate the normal basis~\eqref{eq:normal-basis} on the data, and provide a useful notion of inner-product on $\Faug$ that is amenable to the data-driven case.

\textbf{Data:} let matrices $X^+,X \in \real^{n \times N}$, and $U \in \real^{m \times N}$ be such that $x_i^+ = \Tc(x_i, u_i)$ for $i \in \until{N}$, where $x_i^+$, $x_i$ and $u_i$ are the $i$th columns of $X^+$, $X$, and $U$ respectively.

\textbf{Application of Bases on Data:} given that the input and state spaces of~\eqref{eq:control-system} are subsets of $\real^m$ and $\real^n$, without loss of generality, throughout the paper we use real-valued basis functions to simplify the computations\footnote{$\Span(\Psi)$ still contains complex functions since it is defined on field $\cplx$.}. We use the following notation for application of normal basis $\Psi$ and its component $H$ in~\eqref{eq:normal-basis} on data matrices
\begin{align*}
&\Psi(X,U) := [\Psi(x_1, u_1), \ldots, \Psi(x_N,u_N)] \in \real^{n_{\Psi} \times N},
\\
&H(X) := [H(x_1),  \ldots, H(x_N)] \in \real^{n_{\Hc} \times N}.
\end{align*}
We use the same convention for $\Psi(X^+,U)$ and $H(X^+)$.

\textbf{Choice of Inner-Product Space:} consider the data matrix $Z = [X^T, U^T]^T$ and define the empirical measure $\mu_{Z} = \frac{1}{N} \sum_{i=1}^N \delta_{z_i}$, where $z_i$ is the $i$th column of $Z$ and $\delta_{z_i}$ is the Dirac measure at point $z_i$. We set $\Faug$ to be space $L_2$ under the empirical measure $\mu_{Z}$. Under this choice, the computation of matrix $A$ in~\eqref{eq:A-Kaug} boils down to the well-known Extended Dynamic Mode Decomposition (EDMD) method~\cite{MOW-IGK-CWR:15} applied to system~\eqref{eq:Taug} and basis~\eqref{eq:normal-basis} as
\begin{align}\label{eq:A-edmd}
A = \arg\min_{K} \| \Psi(X^+, U) - K  \Psi(X, U)\|_F.
\end{align}

Throughout the paper, we adopt the following assumption.
\begin{assumption}\longthmtitle{Full Row Rank Dictionary Matrices}\label{a:rank}
The matrices $\Psi(X,U)$ and $H(X^+)$ have full row rank. \oprocend
\end{assumption}

Assumption~\ref{a:rank}  implies that the elements of $\Psi$ must be linearly independent (i.e., they form a basis for $\Span (\Psi)$) and the data needs to be diverse enough to distinguish between the elements of $\Psi$ and $H$. Notably, under Assumption~\ref{a:rank}, the solution of EDMD optimization~\eqref{eq:A-edmd} is unique and can be computed in closed-form by
\begin{align}\label{eq:A-edmd-closed-form}
A = \Psi(X^+,U) \, \Psi(X,U)^\dagger.
\end{align}

\subsection{Problem Statement}
An important aspect of any modeling procedure is the ability to assess the accuracy of the resulting model for purposes of: (i) verification and (ii) the possibility of using the accuracy measure as a loss function in data-driven model learning.  It is therefore imperative to develop data-driven error bounds for the optimal function approximations in Theorem~\ref{t:optimal-approx-Kaug}, with the properties detailed below\footnote{The work~\cite{MH-JC:23-csl} considers a related problem for systems without inputs, where the best data-driven approximations are the EDMD predictors. Our aim here is to extend the notion introduced in~\cite{MH-JC:23-csl} to control systems.}:

\begin{problem}\longthmtitle{Characterizing the Accuracy of Input-State Separable Predictors}\label{prob:accuracy-measure}
Given the normal space $\Sc$ with a normal basis $\Psi$ (cf.~\eqref{eq:normal-basis}) and data matrices $X^+$, $X$, and $U$, we aim to provide an accuracy measure that
\begin{enumerate}
\item bounds the distance between $h \circ \Tc$ and its optimal predictor $\Pf_{h \circ \Tc}$ (cf.~Theorem~\ref{t:optimal-approx-Kaug}) for all $h \in \Hc$;
\item only depends on $\Sc$ and the data, and is invariant under the choice of basis (to be compatible with Lemma~\ref{l:predictor-does-not-depend-on-basis});
\item can be computed in closed-form to be used as loss function in optimization solvers. \oprocend
\end{enumerate}

\end{problem}

\section{Control Forward-Backward Consistency}
Before addressing Problem~\ref{prob:accuracy-measure}, we point out  the fact that only the top part of matrix $A$ ($A_{11}$ and $A_{12}$) in~\eqref{eq:A-Kaug} is used to construct the predictors in Theorem~\ref{t:optimal-approx-Kaug}. Therefore, if the goal is to only construct an input-state separable model, one does not need to compute the entire matrix $A$.
\begin{lemma}\longthmtitle{Computing the Top Block of~\eqref{eq:A-Kaug} in Data-Driven Setting}\label{l:A-top}
Let $A_{11}$ and $A_{12}$ be the top blocks of matrix $A$ in~\eqref{eq:A-edmd-closed-form} according to the partitioning in~\eqref{eq:A-Kaug}. Then, 
\begin{align*}
[A_{11}, A_{12}]
& = \arg\min_{P} \left \| H(X^+) - P\,\Psi(X,U)\right\|_{F} 
\nonumber \\
&= H(X^+) \Psi(X,U)^\dagger. \eqoprocend
\end{align*}
\end{lemma}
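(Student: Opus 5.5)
The idea is to read off the top $n_\Hc$ rows of the closed-form EDMD solution~\eqref{eq:A-edmd-closed-form}. The normal form~\eqref{eq:normal-basis} makes those rows of $\Psi(X^+,U)$ simple.

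\textbf{Step 1: top rows of $\Psi(X^+,U)$.} Because $I_\Uc(u)=I$ for every $u$, the first $n_\Hc$ components of $\Psi(x,u)$ equal $H(x)$. Evaluating at $(x_i^+,u_i)$ column by column gives
\begin{align*}
\Psi(X^+,U) = \begin{bmatrix} H(X^+) \\ \star \end{bmatrix},
\end{align*}
where $\star$ collects the rows coming from $G$. The same holds for $\Psi(X,U)$ with $H(X)$ on top, although only the $X^+$ version is needed.

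\textbf{Step 2: the closed form.} By Assumption~\ref{a:rank}, $A=\Psi(X^+,U)\Psi(X,U)^\dagger$. Row-block multiplication means the first $n_\Hc$ rows of $A$ are the first $n_\Hc$ rows of $\Psi(X^+,U)$ times $\Psi(X,U)^\dagger$. By Step 1 this gives $[A_{11},A_{12}]=H(X^+)\Psi(X,U)^\dagger$.

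\textbf{Step 3: the least-squares characterization.} The Frobenius objective decouples across rows:
\begin{align*}
\|H(X^+)-P\Psi(X,U)\|_F^2=\sum_{j}\|H_j(X^+)-p_j^T\Psi(X,U)\|^2 .
\end{align*}
Here $H_j(X^+)$ is the $j$th row of $H(X^+)$ and $p_j^T$ is the $j$th row of $P$. Each row problem is an ordinary least-squares problem. Since $\Psi(X,U)$ has full row rank, $\Psi(X,U)\Psi(X,U)^T$ is invertible, so the objective is strictly convex in $P$. The unique minimizer is $P=H(X^+)\Psi(X,U)^T(\Psi(X,U)\Psi(X,U)^T)^{-1}=H(X^+)\Psi(X,U)^\dagger$. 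This uses the full-row-rank formula for the pseudoinverse, and it also shows the $\arg\min$ is a singleton, so the equality is meaningful.

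Alternatively, $\|\Psi(X^+,U)-K\Psi(X,U)\|_F^2$ splits into the top-block residual plus the bottom-block residual. These depend on disjoint rows of $K$, so the top rows of the EDMD minimizer~\eqref{eq:A-edmd} minimize the top-block problem on their own. Either route works.

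No step is a real obstacle. The only point needing care is uniqueness, which is exactly what Assumption~\ref{a:rank} provides through invertibility of $\Psi(X,U)\Psi(X,U)^T$.
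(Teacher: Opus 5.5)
Your proposal is correct and is the same argument the paper indicates (its proof is omitted as a direct comparison of closed forms). You read off the top $n_{\Hc}$ rows of $A=\Psi(X^+,U)\,\Psi(X,U)^\dagger$, using $I_{\Uc}\equiv I$ so that those rows of $\Psi(X^+,U)$ are exactly $H(X^+)$, and you match this with the unique least-squares minimizer $H(X^+)\Psi(X,U)^\dagger$, whose uniqueness follows from Assumption~\ref{a:rank}.
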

\smallskip
The proof follows trivially from directly comparing the closed-form solutions of the optimization problems in Lemma~\ref{l:A-top} and \eqref{eq:A-edmd-closed-form} and is omitted for space reasons. Lemma~\ref{l:A-top} provides a much smaller regression problem to solve, in order to build optimal input-state separable models and function predictors in Theorem~\ref{t:optimal-approx-Kaug}. 

Next, we take our first step towards addressing Problem~\ref{prob:accuracy-measure}. Note that the source of error in the approximate input-state separable models and function predictors come from the approximation error in Lemma~\ref{l:A-top}: if  
\begin{align*}
\min_{P} \left \| H(X^+) - P\,\Psi(X,U)\right\|_{F}=0,
\end{align*}
or equivalently $\row(H(X^+))\subseteq \row(\Psi(X,U))$,
the function predictors and models are exact. Otherwise, there will be an error due to this subspace mismatch. Therefore, inspired by the work~\cite{MH-JC:23-csl}, we measure this deviation using a forward-backward regression notion\footnote{
We point out a major difference in terminology with~\cite{MH-JC:23-csl}: the notion of forward-backward regressions in~\cite{MH-JC:23-csl} coincides with running the system forward and backward in time.  However, in control systems the backward regression is not connected to running the system backward in time.
}.

\begin{definition}\longthmtitle{Control Forward-Backward Consistency Matrix}\label{def:Mcc}
Given the elements of Problem~\ref{prob:accuracy-measure},
let $A_f \in \real^{n_{\Hc} \times n_{\Psi}}$ and $A_b  \in \real^{n_{\Psi} \times n_{\Hc}}$ be the solutions of the following forward and backward least-squares problems
\begin{align*}
A_f = \arg\min_{P} \| H(X^+) - P \Psi(X,U) \|_F = H(X^+) \Psi(X,U)^\dagger,
\nonumber \\
A_b = \arg\min_{Q} \| \Psi(X,U) - Q H(X^+) \|_F = \Psi(X,U) H(X^+)^\dagger.
\end{align*}
We define the control forward-backward consistency matrix (control consistency matrix for short) as
\begin{align}\label{eq:control-consistency}
M_{CC} = I - A_f A_b. \eqoprocend
\end{align}
\end{definition}
\smallskip

Note that if $\row(H(X^+)) \subseteq \row(\Psi(X,U))$, we have $M_{CC} = 0$, otherwise, it would be non-zero. Next, we state a few basic properties of the control consistency matrix.

\begin{proposition}\longthmtitle{Properties of Control Consistency Matrix}\label{p:Mcc-properties}
Given Assumption~\ref{a:rank}, the control consistency matrix has the following properties:
\begin{enumerate}
\item it is similar to a symmetric matrix;
\item its eigenvalues belong to the interval $[0,1]$.
\end{enumerate}
\end{proposition}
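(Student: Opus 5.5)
Write $\Psi := \Psi(X,U)\in\real^{n_\Psi\times N}$ and $Y := H(X^+)\in\real^{n_\Hc\times N}$. Under Assumption~\ref{a:rank} both have full row rank, so
\begin{align*}
\Psi^\dagger = \Psi^T(\Psi\Psi^T)^{-1}, \qquad Y^\dagger = Y^T(YY^T)^{-1}.
\end{align*}
Substituting these into Definition~\ref{def:Mcc} gives
\begin{align*}
A_fA_b = Y\Psi^T(\Psi\Psi^T)^{-1}\Psi Y^T(YY^T)^{-1} = Y\Pi_\Psi Y^T (YY^T)^{-1},
\end{align*}
where $\Pi_\Psi := \Psi^T(\Psi\Psi^T)^{-1}\Psi = \Psi^\dagger\Psi$ is the orthogonal projector in $\real^N$ onto $\row(\Psi)$. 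Hence
\begin{align*}
M_{CC} = I - Y\Pi_\Psi Y^T(YY^T)^{-1} = Y(I-\Pi_\Psi)Y^T(YY^T)^{-1}.
\end{align*}
To obtain the second form, I write $I = YY^T(YY^T)^{-1}$. Set $R := YY^T$, which is symmetric positive definite by the rank assumption, and $S := Y(I-\Pi_\Psi)Y^T$, which is symmetric positive semidefinite because $I-\Pi_\Psi$ is an orthogonal projector. Then $M_{CC} = SR^{-1}$.

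\textbf{Property 1.} Using $R^{\frac12}$, which is invertible, I compute
\begin{align*}
R^{-\frac12} M_{CC} R^{\frac12} = R^{-\frac12} S R^{-\frac12} =: \tilde M .
\end{align*}
The matrix $\tilde M$ is symmetric and positive semidefinite, so $M_{CC}$ is similar to it. This also shows that $\spec(M_{CC}) = \spec(\tilde M)\subset [0,\infty)$.

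\textbf{Property 2.} It remains to show $\lambda_{\max}(\tilde M)\le 1$. Since $\Pi_\Psi \succeq 0$, we have $S = R - Y\Pi_\Psi Y^T \preceq R$. Congruence by $R^{-\frac12}$ preserves this ordering, so $\tilde M \preceq I$. Equivalently, the eigenvalues of $M_{CC}$ are the generalized Rayleigh quotients
\begin{align*}
\frac{\|(I-\Pi_\Psi)Y^Tv\|^2}{\|Y^Tv\|^2} \in [0,1], \qquad v \neq 0.
\end{align*}
This completes both claims.

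The only mildly delicate step is the rewriting of $I - A_fA_b$ as $SR^{-1}$ with $S$ symmetric. That step needs the full-row-rank assumption on $H(X^+)$, which guarantees that $YY^T$ is invertible and that $Y^\dagger$ has the stated closed form. The rest is routine projector algebra.
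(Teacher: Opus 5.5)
Your proof is correct and is essentially the paper's argument: the paper also conjugates $M_{CC}$ by $(H(X^+)H(X^+)^T)^{\frac{1}{2}}$ and arrives at the same symmetric matrix $I - (YY^T)^{-\frac{1}{2}} Y \Pi_\Psi Y^T (YY^T)^{-\frac{1}{2}}$. For the $[0,1]$ bound, the paper takes an eigenvector $v$ and writes the eigenvalue as $\|(I-\Pi_\Psi)w\|^2/\|w\|^2$ with $w = Y^T(YY^T)^{-\frac{1}{2}}v$, which is the same Rayleigh-quotient fact that your Loewner-order step $Y(I-\Pi_\Psi)Y^T \preceq YY^T$ expresses in matrix form.
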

\begin{proof}
For brevity, we use the notation $J = H(X^+)$ and $L=\Psi(X,U)$ throughout this proof.

(a) Using the closed-form solutions for $A_f$ and $A_b$ in Definition~\ref{def:Mcc} and given Assumption~\ref{a:rank}, one can write
\begin{align}\label{eq:consistency-expanded-1}
M_{CC} = I - J L^\dagger L J^\dagger = I - J L^\dagger L J^T (J J^T)^{-1},
\end{align}
where we have used $J^\dagger = J^T (J J^T)^{-1}$. Let $R = (J J^T)^{\frac{1}{2}}$ and note that $R$ exists since $JJ^T$ is a symmetric positive definite matrix. Then, we can define
\begin{align}\label{eq:similarity-transform}
\tilde{M}_{CC} &= R^{-1} M_{CC} R = R^{-1} R - R^{-1} J L^\dagger L J^T (J J^T)^{-1} R 
\nonumber \\
&=  I - (J J^T)^{-\frac{1}{2}} J L^\dagger L J^T (J J^T)^{-\frac{1}{2}}.
\end{align}
Let $S = J^T (J J^T)^{-\frac{1}{2}}$. Since $(J J^T)^{-\frac{1}{2}}$ is symmetric
\begin{align}\label{eq:similarity-transform-2}
\tilde{M}_{CC} = R^{-1} M_{CC} R = I - S^T L^\dagger  L S.
\end{align}  
The proof follows from the fact that $L^\dagger  L $ is symmetric.

(b) Since similarity transformations preserve the eigenvalues, based on~\eqref{eq:similarity-transform-2}, we have $\spec(\tilde{M}_{CC}) = \spec(M_{CC})$. Moreover, since $\tilde{M}_{CC}$ is symmetric, its eigenvalues are real and its eigenvectors are orthogonal. Now, let $v \in \real^{n_{\Hc}}\setminus \{0\}$ be an eigenvector of $\tilde{M}_{CC}$ with eigenvalue $\lambda \in \real\setminus \{0\}$, i.e., $\tilde{M}_{CC} v = \lambda v$. Multiplying both sides from the left by $v ^T$ and using~\eqref{eq:similarity-transform-2}, gives
\begin{align*}
\lambda v^T v = v^T \tilde{M}_{CC} v = v^T v - v^T S^T L^\dagger L S v,
\end{align*}
where $S = J^T (J J^T)^{-\frac{1}{2}}$ and we have $S^T S = I$; hence, by defining $w = Sv$, we can write the previous equation as
\begin{align}\label{eq:similarity-transform-3}
\lambda w^T w = w^T w - w L^\dagger L w = w^T (I-L^\dagger L) w.
\end{align}
Moreover, note that matrix $I-L^\dagger L$ projects orthogonally onto the null space of $L$ and is symmetric. Using the basic property of pseudoinverse (that $L L^\dagger L = L$) we find that $(I-L^\dagger L)^T (I-L^\dagger L) = (I-L^\dagger L)^2 = I-L^\dagger L$.  Using this property in conjunction with~\eqref{eq:similarity-transform-3}, one can write $\lambda \| w \|^2 =  w^T (I-L^\dagger L)^T (I-L^\dagger L) w = \| (I-L^\dagger L) w \|^2$, leading to
\begin{align}\label{eq:eigenvalue-ratio}
\lambda  =  \frac{\| (I-L^\dagger L) w \|^2}{\| w\|^2}.
\end{align}
From~\eqref{eq:eigenvalue-ratio}, clearly $\lambda \geq 0$. Moreover, since $(I-L^\dagger L)$ is an orthogonal projection, we have $\| (I-L^\dagger L) w \| \leq \| w\|$. By~\eqref{eq:eigenvalue-ratio}, this implies $\lambda \leq 1$, concluding the proof.
\end{proof}

It is worth noting that Proposition~\ref{p:Mcc-properties} implies that $M_{CC}$ is similar to a positive semi-definite matrix.
Next, we show that the spectrum of $M_{CC}$ in invariant under the choice of normal basis, a property  we sought in Problem~\ref{prob:accuracy-measure}.

\begin{proposition}\longthmtitle{$\spec(M_{CC})$ is Invariant under the Choice of Normal Basis}\label{p:consistency-spectrum-invariance}
Let $\Psi$ and $\bar{\Psi}$ be two normal bases for the normal space $\Sc$, written as
\begin{align*}
\Psi = \begin{bmatrix} I_{\Uc} \\ G \end{bmatrix} H , \quad \bar{\Psi} = \begin{bmatrix} I_{\Uc}  \\ \bar{G} \end{bmatrix} \bar{H}.
\end{align*}
Given data matrices $X,X^+, U$ and Assumption~\ref{a:rank}, let $M_{CC}$ and $\bar{M}_{CC}$ be control consistency matrices given bases $\Psi$ and $\bar{\Psi}$ respectively. Then, $\spec(M_{CC}) = \spec(\bar{M}_{CC})$.
\end{proposition}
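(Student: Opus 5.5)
The plan is to show that the two normal bases are related by invertible constant matrices. This makes $\bar{M}_{CC}$ a similarity transform of $M_{CC}$, and similarity preserves the spectrum. As in the proof of Proposition~\ref{p:Mcc-properties}, write $J = H(X^+)$, $L = \Psi(X,U)$, $\bar{J} = \bar{H}(X^+)$ and $\bar{L} = \bar{\Psi}(X,U)$.

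\emph{Step 1 (relating $\Psi$ and $\bar{\Psi}$).} Both $\Psi$ and $\bar{\Psi}$ consist of linearly independent functions spanning $\Sc$. Hence $n_\Psi$ is the same for both, and there is an invertible $T \in \cplx^{n_\Psi \times n_\Psi}$ with $\bar{\Psi} = T\Psi$. Evaluating on the data gives $\bar{L} = T L$. Since $T$ is invertible, $\row(\bar{L}) = \row(L)$. Because $L^\dagger L$ is the orthogonal projector onto $\row(L)$, it follows that $\bar{L}^\dagger \bar{L} = L^\dagger L$.

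\emph{Step 2 (relating $H$ and $\bar{H}$).} This is the step needing care, since the normal form ties $H$ to $\Sc$ only indirectly. I would characterize $\Span(H)$ intrinsically as $\Sc_x$, the set of elements of $\Sc$ that do not depend on $u$.
\begin{itemize}
\item Because of the $I_\Uc$ block, every element of $H$, viewed as a function of $(x,u)$, is a member of $\Psi$. So $\Span(H) \subseteq \Sc_x$.
\item Conversely, let $f = v^* \Psi \in \Sc$ be independent of $u$. Then $f(x,u) = (v_1^* + v_2^* G(u)) H(x)$. Fixing any $u_0 \in \Uc$ gives $f = (v_1^* + v_2^* G(u_0)) H \in \Span(H)$.
\end{itemize}
Hence $\Span(H) = \Sc_x$, and by the same argument $\Span(\bar{H}) = \Sc_x$. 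The elements of $H$ and of $\bar{H}$ are linearly independent, since each is a subset of a linearly independent family. Therefore $n_\Hc = \bar{n}_\Hc$ and $\bar{H} = E H$ for some invertible $E$. Evaluating at $X^+$ gives $\bar{J} = E J$. Note that $\bar{J}$ keeps full row rank, consistent with Assumption~\ref{a:rank}.

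\emph{Step 3 (similarity).} Substitute into the expanded form~\eqref{eq:consistency-expanded-1}:
\begin{align*}
\bar{M}_{CC} = I - E J L^\dagger L J^T E^T \left(E J J^T E^T\right)^{-1} = I - E J L^\dagger L J^T (J J^T)^{-1} E^{-1} = E M_{CC} E^{-1}.
\end{align*}
Here I use the fact that the basis functions are real-valued, so $E$ may be taken real. If complex bases were allowed, the same argument would go through with $^*$ in place of $^T$. Since $\bar{M}_{CC}$ and $M_{CC}$ are similar, $\spec(\bar{M}_{CC}) = \spec(M_{CC})$.

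The main obstacle is Step 2, the intrinsic identification of $\Span(H)$ from $\Sc$. Once that is established, the rest is routine matrix algebra.
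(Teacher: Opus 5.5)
Your proof is correct and shares the paper's skeleton: $\bar L^\dagger \bar L = L^\dagger L$ by uniqueness of the orthogonal projector, $\bar J$ is an invertible constant transform of $J$, and then~\eqref{eq:consistency-expanded-1} exhibits $\bar M_{CC}$ as a similarity transform of $M_{CC}$.

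The difference is in how you relate $H$ and $\bar H$. The paper invokes its appendix lemma on change of normal bases. That lemma shows the change-of-basis matrix $R$ with $\bar\Psi = R\Psi$ has $R_{12}=0$ and $R_{11}$ invertible, hence $\bar H = R_{11}H$. You instead identify $\Span(H)$ intrinsically as the subspace $\Sc_x$ of $u$-independent elements of $\Sc$, which gives $\bar H = EH$ directly.

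Your route is more self-contained. The proof of the appendix lemma starts from $\Span(I_{\Uc}H) = \Span(I_{\Uc}\bar H)$ and takes it as given; that identity is exactly what your fixing-$u_0$ argument establishes. Your characterization also recovers the lemma in one line. Substituting $\bar H = EH$ into the top block of $\bar\Psi = R\Psi$ gives $[E - R_{11},\, -R_{12}]\Psi = 0$, and linear independence of $\Psi$ then forces $E = R_{11}$ and $R_{12} = 0$. The block-triangular structure the paper proves is extra information that the spectral claim does not need.

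Your attention to the realness of $E$ also covers a point the paper passes over when it writes $(R_{11}J)^T$. Strictly, $E$ is unique, not merely "may be taken" real. Since $H$ and $\bar H$ are real and $H$ is linearly independent, the imaginary part of $E$ must vanish.
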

\begin{proof}
For brevity, we use the notation $J = H(X^+)$, $\bar{J} = \bar{H}(X^+)$, $L=\Psi(X,U)$, and $\bar{L}= \bar{\Psi}(X,U)$.

Since both $\Psi$ and $\bar{\Psi}$ are bases for the same vector space, there exists a nonsingular matrix $R$ such that $\bar{\Psi} = R \Psi$.  Decompose $R$ in blocks with appropriate sizes corresponding to the decomposition of $\Psi$ as
\begin{align}\label{eq:R-decompose2}
R = \begin{bmatrix} R_{11} & R_{12}
\\
R_{21} & R_{22}
\end{bmatrix}.
\end{align}
Based on Lemma~\ref{l:change-normal-bases} in the appendix, we have $R_{12} = 0$ and $R_{11}$ is invertible. Hence, $\bar{J} = R_{11} J$, and one can write
\begin{align}\label{eq:Mcc-change-of-basis-1}
\bar{M}_{CC} = I - \bar{J} \bar{L}^\dagger \bar{L} \bar{J}^\dagger = R_{11} R_{11}^{-1} - R_{11} J (L^\dagger L) (R_{11} J)^\dagger ,
\end{align}
where in the last equality we have used $\bar{L}^\dagger \bar{L} = L^\dagger L$ since these matrices are orthogonal projections on the same subspace and therefore equal as a result of the uniqueness of orthogonal projections. 
Next, noting that $\bar{J}$ has full row rank and $R_{11}$ is nonsingular (cf. Lemma~\ref{l:change-normal-bases}), one can write
\begin{align*}
&(R_{11} J)^\dagger = (R_{11} J)^T \big( (R_{11} J) (R_{11} J)^T \big)^{-1}
\\
&= J^T R_{11}^T (R_{11} J J^T R_{11}^T)^{-1} = J^T (J J^T )^{-1} R_{11}^{-1} = J^\dagger R_{11}^{-1}.
\end{align*}
Using the previous equality in conjunction with~\eqref{eq:Mcc-change-of-basis-1}, gives
\begin{align*}
\bar{M}_{CC} = R_{11} (I -  J (L L^\dagger) J^\dagger ) R_{11}^{-1} = R_{11} M_{CC} R_{11}^{-1}.
\end{align*}
The proof concludes by noting that similarity transformations do not change the eigenvalues.
\end{proof}

Proposition~\ref{p:consistency-spectrum-invariance} shows that the eigenvalues of $M_{CC}$ are an intrinsic property of the underlying subspace and the problem data, and do not depend on
a choice of basis. The spectral radius of $M_{CC}$ which coincides with its largest eigenvalue following Proposition~\ref{p:Mcc-properties}, measures how close $M_{CC}$ is to the zero matrix and plays a central role in the rest of the paper. Hence, we provide the following definition.

\begin{definition}\longthmtitle{Control Consistency Index (CCI)}
We define the \emph{control consistency index} denoted by $\Ic_{CC}$ as the largest eigenvalue of $M_{CC}$, i.e., $\Ic_{CC}= \lambda_{\max}(M_{CC})$. \oprocend
\end{definition}

Next, we study how the control consistency index can provide an accuracy measure for the quality of input-state separable models and the corresponding function predictors.

\section{The Control Consistency Index Sharply Bounds the Predictors' Accuracy}
Our main result shows that the {\em control consistency index} (CCI) tightly bounds the worst-case relative root mean square error (RRMSE) of function approximations.

\begin{theorem}\longthmtitle{RRMSE of Prediction is Sharply Bounded by the CCI}\label{t:rrmse-bound}
Consider the data-driven setting in Section~\ref{sec:data} and Assumption~\ref{a:rank}. Given $h \in \Hc$, let $h \circ \Tc$ be its evolution under~\eqref{eq:control-system}, and let $\Pf_{h \circ \Tc}$ be its KCF predictor (cf.~Theorem~\ref{t:optimal-approx-Kaug}). Define the relative root mean square error for the prediction of $h \circ \Tc$ on the data as
\begin{align}\label{eq:rrmse-def}
\RRMSE_{h \circ \Tc} &= \sqrt{ \frac{\frac{1}{N} \sum_{i=1}^N |h\circ \Tc(x_i,u_i) - \Pf_{h \circ \Tc}(x_i,u_i)|^2}{\frac{1}{N} \sum_{i=1}^N |h \circ \Tc (x_i,u_i)|^2} }
\nonumber \\
&=\sqrt{ \frac{\sum_{i=1}^N |h(x_i^+) - \Pf_{h \circ \Tc}(x_i,u_i)|^2}{ \sum_{i=1}^N |h(x_i^+)|^2}}.
\end{align}
Then, $\RRMSE_{\max} := \max_{\substack{h \in \Hc \\ h \neq 0}} \RRMSE_{h \circ \Tc} = \sqrt{\Ic_{CC}}$.
\end{theorem}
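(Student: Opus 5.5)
Write $J = H(X^+)$ and $L = \Psi(X,U)$, as in the proof of Proposition~\ref{p:Mcc-properties}. The plan is to turn the RRMSE into a generalized Rayleigh quotient whose maximum over $v_h$ is exactly $\lambda_{\max}(M_{CC})$.

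\textbf{Step 1 (express the predictor on data).} For $h = v_h^* H \in \Hc$, the vector of values of $h \circ \Tc$ on the data is $v_h^* J$. By Theorem~\ref{t:optimal-approx-Kaug} and Lemma~\ref{l:A-top}, the predictor is $\Pf_{h\circ\Tc} = v_h^* [A_{11}, A_{12}]\Psi$. Evaluated on the data, this gives
\[
v_h^* A_f L = v_h^* J L^\dagger L .
\]
So the error row vector is $v_h^* J (I - L^\dagger L)$.

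\textbf{Step 2 (write the RRMSE as a Rayleigh quotient).} Since $I - L^\dagger L$ is a symmetric idempotent, the squared norm of the error is $v_h^* J (I - L^\dagger L) J^T v_h$. The denominator is $v_h^* J J^T v_h$, which is strictly positive for $v_h \neq 0$ because $J$ has full row rank (Assumption~\ref{a:rank}). Hence
\[
\RRMSE_{h\circ\Tc}^2 = \frac{v_h^* J (I - L^\dagger L) J^T v_h}{v_h^* J J^T v_h}.
\]
Moreover, $h \neq 0$ if and only if $v_h \neq 0$, because the elements of $H$ are linearly independent (this follows from the full row rank of $L$).

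\textbf{Step 3 (reduce to the symmetrized matrix).} Substitute $v_h = (JJ^T)^{-1/2} y$ and set $S = J^T (JJ^T)^{-1/2}$. Then $S^T S = I$, and the quotient becomes
\[
\frac{y^* S^T (I - L^\dagger L) S y}{y^* y} = \frac{y^* \tilde M_{CC}\, y}{y^* y},
\]
where $\tilde M_{CC} = I - S^T L^\dagger L S$ is the real symmetric matrix in~\eqref{eq:similarity-transform-2}. The map $v_h \mapsto y$ is a bijection of $\cplx^{n_\Hc}\setminus\{0\}$ onto itself.

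\textbf{Step 4 (identify the maximum).} By the Rayleigh--Ritz theorem for Hermitian matrices, the maximum of this quotient over complex $y \neq 0$ equals $\lambda_{\max}(\tilde M_{CC})$. It is attained at a top eigenvector, which may be taken real. By the similarity in Proposition~\ref{p:Mcc-properties}, $\lambda_{\max}(\tilde M_{CC}) = \lambda_{\max}(M_{CC}) = \Ic_{CC}$. Since this value lies in $[0,1]$, taking square roots gives $\RRMSE_{\max} = \sqrt{\Ic_{CC}}$, and the maximum is attained, which gives sharpness.

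\textbf{Main obstacle.} The step needing the most care is Step 1: confirming that the data evaluation of $\Pf_{h\circ\Tc}$ is exactly $v_h^* A_f L$. This requires identifying $[A_{11}, A_{12}]\Psi(X,U) = A_f L$ through Lemma~\ref{l:A-top}, and noting that $A_{11} I_\Uc + A_{12} G$ applied to $H$ equals $[A_{11}, A_{12}]\Psi$ pointwise. The remaining concern is bookkeeping: with real data, the conjugate transposes reduce to transposes. The rest is a standard Rayleigh-quotient argument.
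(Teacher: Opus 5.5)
Your proof is correct and is essentially the paper's argument. The paper rewrites the quotient using an arbitrary matrix $\bar{J}$ whose rows form an orthonormal basis of the row space of $J$, and then proves $M_{CC} = R(I - \bar{J}L^\dagger L\bar{J}^T)R^{-1}$ directly; you instead take the specific choice $\bar{J} = S^T = (JJ^T)^{-1/2}J$ and reuse the similarity already established in Proposition~\ref{p:Mcc-properties}. Your explicit checks that the denominator is positive, that $h \neq 0$ if and only if $v_h \neq 0$, and that the maximum is attained are details the paper leaves implicit.
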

\begin{proof}
For brevity, we use the notation $J = H(X^+)$ and $L = \Psi(X,U)$ throughout the proof.
Given $h \in \Hc$ with representation $h = v_h^* H$, the identity $h \circ \Tc(x_i,u_i) = h(x_i^+)$, the definition of $\Pf_{h \circ \Tc}$ in Theorem~\ref{t:optimal-approx-Kaug}, and closed-form solution in Lemma~\ref{l:A-top}, one can use~\eqref{eq:rrmse-def} to write
\begin{align}\label{eq:rrmse-squared}
\RRMSE_{h \circ \Tc}^2 = \frac{ \| v_h^* J - v_h^* J L^\dagger L \|^2 }{ \|v_h^* J \|^2 }.
\end{align}
Noting that the map $v_h \in \cplx^{\dim(\Hc)} \leftrightarrow h \in \Hc$ is bijective, one can use~\eqref{eq:rrmse-squared} and the definition of $\RRMSE_{\max}$, to write
\begin{align}\label{eq:rrmse-max-v}
\RRMSE_{\max}^2 = \max_{\substack{v \in \cplx^{n_{\Hc}} \\ v \neq 0}} \frac{ \| v^* J - v^* J L^\dagger L \|^2 }{ \|v^* J \|^2 }.
\end{align}
Consider the matrix $\bar{J} \in \real^{n_{\Hc} \times N}$ whose rows form an orthonormal basis for $\row(J)$. Assumption~\ref{a:rank} implies that $\bar{J}$ always exists.
Since for all $v \in \cplx^{n_{\Hc}}$ there exists a unique $w \in \cplx^{n_{\Hc}}$ such that $v^* J = w^* \bar{J}$, we can
rewrite~\eqref{eq:rrmse-max-v} as
\begin{align}\label{eq:rrmse-max-w}
\RRMSE_{\max}^2 &= \max_{\substack{w \in \cplx^{n_{\Hc}} \\ w \neq 0}} \frac{ \| w^* \bar{J} - w^* \bar{J} L^\dagger L \|^2 }{ \| w^* \bar{J} \|^2 }
\nonumber \\
&= \max_{\substack{w \in \cplx^{n_{\Hc}} \\ w \neq 0}} \frac{ \| w^* \bar{J} - w^* \bar{J} L^\dagger L \|^2 }{ \| w^* \|^2 },
\end{align}
where in the second equality we have used the orthonormality of the rows of $\bar{J}$, i.e., $\bar{J} \bar{J}^T = I$.
Moreover, using the same identity, the numerator of~\eqref{eq:rrmse-max-w} can be expanded as
\begin{align}\label{eq:numerator-expansion}
\| w^* \bar{J} - w^* \bar{J} L^\dagger L \|^2 = w^* w - 2 w^* \bar{J} L^\dagger L \bar{J}^T w
\nonumber \\
 + w^* \bar{J} L^\dagger L (L^\dagger L)^T \bar{J}^T w
= w^* w - w^* \bar{J} L^\dagger L \bar{J}^T w
\nonumber \\
= w^* ( I - \bar{J} L^\dagger L \bar{J}^T ) w,
\end{align}
where in the second equality we noted that $L^\dagger L$ is symmetric, and therefore $L^\dagger L (L^\dagger L)^T = L^\dagger L L^\dagger L = L^\dagger L$ (we have used the identity $LL^\dagger L = L$).
Combining~\eqref{eq:rrmse-max-w} and~\eqref{eq:numerator-expansion} yields
\begin{align}\label{eq:rrmse-lambda-max}
\RRMSE_{\max}^2 &= \max_{\substack{w \in \cplx^{n_{\Hc}} \\ w \neq 0}} \frac{ w^* ( I - \bar{J} L^\dagger L \bar{J}^T ) w }{ \|w^*\|^2 }
\nonumber \\
&= \lambda_{\max}(I - \bar{J} L^\dagger L \bar{J}^T).
\end{align}
To complete the proof, we just need to show $\lambda_{\max}(M_{CC}) = \lambda_{\max}(I - \bar{J} L^\dagger L \bar{J}^T)$.
Let $R$ be such that $J = R \bar{J}$. Then,
\begin{align}\label{eq:mcc-similarity}
M_{CC} &= I - J L^\dagger L J^\dagger = I - R \bar{J} L^\dagger L (R \bar{J})^\dagger
\nonumber \\
&= I - R \bar{J} L^\dagger L (R \bar{J})^T [ (R \bar{J}) (R \bar{J})^T ]^{-1}
\nonumber \\
&= R ( I - \bar{J} L^\dagger L \bar{J}^T ) R^{-1},
\end{align}
where in the second equality we used $A^\dagger = A^T (A A^T)^{-1}$ for full row rank matrices, and in the last equality we used the fact that $\bar{J} \bar{J}^T = I$.
Equality~\eqref{eq:mcc-similarity} shows that $M_{CC}$ and $I - \bar{J} L^\dagger L \bar{J}^T$ are similar; therefore, their eigenvalues are equal; hence, $\lambda_{\max}(M_{CC}) = \lambda_{\max}(I - \bar{J} L^\dagger L \bar{J}^T)$.
This, together with~\eqref{eq:rrmse-lambda-max}, concludes the proof.
\end{proof}

\begin{remark}\longthmtitle{The CCI Bounds the Relative $L_2$ Error}
Theorem~\ref{t:rrmse-bound} can be interpreted as a sharp bound on the worst-case relative $L_2$-norm prediction. In fact, given the empirical measure $\mu_{Z}$ defined in Section~\ref{sec:data}, we have
\begin{align*}
\sqrt{\Ic_{CC}} = \max_{\substack{h \in \Hc \\ h \neq 0}} \frac{\| h \circ \Tc - \Pf_{h \circ \Tc} \|_{L_2 (\mu_{Z})}}{\| h \circ \Tc \|_{L_2 (\mu_{Z})}}.
\eqoprocend
\end{align*}
\end{remark}
\smallskip

\section{Implications for Linear and Bilinear Forms}
Lifted linear and bilinear models are subclasses of input-state separable forms~\cite[Lemma~4.6]{MH-JC:26-auto}. This leads to the following natural question:
\begin{quote}
  {\em How can one apply the results presented here 
  to the case of lifted linear and bilinear models?}   
\end{quote}
To answer this question, we next provide forms of the normal basis~\eqref{eq:normal-basis} such that the input-state separable models in Theorem~\ref{t:optimal-approx-Kaug} become lifted linear or bilinear models.

\begin{lemma}\longthmtitle{Choice of Normal Basis for Lifted Linear Models}\label{l:lifted-linear-basis}
In the non-degenerate normal basis~\eqref{eq:normal-basis} set
\begin{align}\label{eq:lifted-linear-defs}
H(x) = \begin{bmatrix} \bar{H}(x) \\ 1_{\Xc}(x) \end{bmatrix}, \quad
G(u) =  [0_{m \times \dim(\bar{H})},  u],
\end{align}
where $1_{\Xc}: \Xc \to \real$ is the constant function equal to one, i.e., $1_{\Xc} (x) =1$ for all $x \in \Xc$.
Then, the input-state separable model in Theorem~\ref{t:optimal-approx-Kaug} turns into
\begin{align*}
  H \circ \Tc \bestapprox{\Sc}  A_{11} I_{\Uc} H + A_{12} (u \, 1_{\Xc}) .  
\end{align*}
 Moreover, given the initial condition $x_0 \in \Xc$, the associated point-wise predictor (cf.~\eqref{eq:input-state-separable-pointwise}) turns into the lifted linear model
\begin{align*}
z^+ = A_{11} z + A_{12} u, \quad \text{with} \quad z_0 = H(x_0). \eqoprocend
\end{align*}
\end{lemma}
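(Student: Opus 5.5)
The plan is to substitute the specific choices \eqref{eq:lifted-linear-defs} into the normal form \eqref{eq:normal-basis} and read off the conclusion of Theorem~\ref{t:optimal-approx-Kaug}.

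\textbf{Step 1 (normal form).} Since $G(u)=[0_{m\times\dim(\bar H)},u]$ has size $m\times n_{\Hc}$ and the last entry of $H$ is $1_{\Xc}$, we get $G(u)H(x)=u\,1_{\Xc}(x)$. Hence
\begin{align*}
\Psi(x,u)=\begin{bmatrix} H(x)\\ u\end{bmatrix}.
\end{align*}
This is of the form \eqref{eq:normal-basis}. Linear independence of the elements of $\Psi$ is part of the standing setup; in the data-driven case it follows from Assumption~\ref{a:rank}. So $\Psi$ is a non-degenerate normal basis and Theorem~\ref{t:optimal-approx-Kaug} applies.

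\textbf{Step 2 (function form).} Part (b) of Theorem~\ref{t:optimal-approx-Kaug} gives $H\circ\Tc \bestapprox{\Sc} (A_{11}I_{\Uc}+A_{12}G)H$. Distributing the product and using Step 1 yields $A_{11}I_{\Uc}H + A_{12}(u\,1_{\Xc})$, which is the claimed function form.

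\textbf{Step 3 (pointwise predictor).} By \eqref{eq:input-state-separable-pointwise}, the predictor is $z^+=\Ac(u)z$ with $\Ac(u)=A_{11}+A_{12}G(u)$. This gives
\begin{align*}
z^+ = A_{11}z + A_{12}\,u\,z_{n_{\Hc}},
\end{align*}
where $z_{n_{\Hc}}$ is the last component of $z$. The only point needing care, and the main obstacle, is to show that $z_{n_{\Hc}}\equiv 1$ along trajectories started at $z_0=H(x_0)$. Once that holds, the recursion is exactly $z^+=A_{11}z+A_{12}u$.

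To establish this, I would use that $1_{\Xc}\circ\Tc = 1_{\Xc}$, and that $1_{\Xc}$, viewed as a function on $\Xc\times\Uc$, is itself an element of $\Psi$ and hence lies in $\Sc$. Part (a) of Theorem~\ref{t:optimal-approx-Kaug}, applied with $h=1_{\Xc}=e_{n_{\Hc}}^*H$, says the predictor of $h\circ\Tc$ is its orthogonal projection onto $\Sc$. Since $h\circ\Tc=1_{\Xc}\in\Sc$, that projection is exact:
\begin{align*}
e_{n_{\Hc}}^*(A_{11}I_{\Uc}+A_{12}G)H = 1_{\Xc}.
\end{align*}
Uniqueness of coordinates in the basis $\Psi$ then forces the last row of $[A_{11},A_{12}]$ to be $[e_{n_{\Hc}}^T,0]$. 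In the data-driven case, Lemma~\ref{l:A-top} together with the full-rank condition in Assumption~\ref{a:rank} gives the same conclusion directly, since the last row of $H(X^+)$ is a row of $\Psi(X,U)$.

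Consequently the last component of $z^+$ equals $z_{n_{\Hc}}$, and induction from $z_0=H(x_0)$, whose last entry is $1$, gives $z_{n_{\Hc}}\equiv1$ along the trajectory. This completes the argument.
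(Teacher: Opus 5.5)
Your proof is correct. Steps 1 and 2 are exactly the paper's argument, which is a one-line ``direct computation'' using $G(u)H(x)=u\,1_{\Xc}(x)=u$.

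The difference is in Step 3. The paper obtains the point-wise model by evaluating the function form at $(x,u)$ and using $1_{\Xc}(x)=1$. In doing so it tacitly replaces the constant component by the number $1$, not by the last coordinate of the lifted state. You instead start from the literal recursion $z^+=\Ac(u)z$ in \eqref{eq:input-state-separable-pointwise} and note that it reads $z^+=A_{11}z+A_{12}u\,z_{n_{\Hc}}$. You then close this gap by showing that the last row of $[A_{11},A_{12}]$ is $[e_{n_{\Hc}}^T,0]$. The reason is that the constant function lies in $\Sc$ and is unchanged by composition with $\Tc$, so its projection is exact. Equivalently, in the data-driven setting, the last row of $H(X^+)$ is a row of the full-row-rank matrix $\Psi(X,U)$. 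Hence $z_{n_{\Hc}}\equiv 1$ along trajectories started at $z_0=H(x_0)$.

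This extra step gives a rigorous justification that the input-state separable recursion and the lifted linear recursion generate identical trajectories. The paper only alludes to this in its later remark that constant functions are always predicted exactly. The paper's route is shorter, but it reads the point-wise model off the function form rather than deriving it from \eqref{eq:input-state-separable-pointwise}.
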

\smallskip
The proof directly follows from direct computation and noting that $u \, 1_{\Xc}(x) = u$ for all $x \in \Xc$.

\begin{remark}\longthmtitle{Generality of the Decomposition of $H$ in Lemma~\ref{l:lifted-linear-basis}}
The function $1_{\Xc}$ can always be included in $H$. If $\Span(H)$ contains a constant function, the form in~\eqref{eq:lifted-linear-defs} follows by a change of basis on $H$. If $1_{\Xc} \notin \Span(H)$, one can enlarge $H$ by adding $1_{\Xc}$ to it. This addition never degrades a model's accuracy, and may even improve it, since components in $\Span(1_{\Xc})$ are always predicted exactly: constant functions remain constant along all trajectories.
\oprocend
\end{remark}
\smallskip

\begin{lemma}\longthmtitle{Choice of Normal Basis for Lifted Bilinear Models}\label{l:lifted-bilinear-basis}
In the non-degenerate normal basis~\eqref{eq:normal-basis}, set
\begin{align}\label{eq:bilinear-G-def}
G(u) = [u_1 I, u_2 I,  \ldots,  u_m I ]^T,
\end{align}
where $I$ is the identity matrix of size $n_{\Hc}$. Decompose $A_{12} \in \cplx^{n_{\Hc} \times m\, n_{\Hc}}$ in~\eqref{eq:A-Kaug} as $A_{12} = [B_1, B_2, \ldots, B_m]$ with $B_i \in \cplx^{n_{\Hc} \times n_{\Hc}}$.
Then, the input-state separable model in Theorem~\ref{t:optimal-approx-Kaug} turns into
\begin{align*}
  H \circ \Tc \bestapprox{\Sc}  (A_{11} I_{\Uc} + \sum_{i=1}^m u_i B_i) H .  
\end{align*}
Also, given the initial condition $x_0 \in \Xc$, the associated point-wise predictor (cf.~\eqref{eq:input-state-separable-pointwise}) turns into the lifted bilinear model
\begin{align*}
	z^+ = A_{11} z + \sum_{i=1}^m u_i B_i z, \quad \text{with} \quad z_0 = H(x_0). \eqoprocend
\end{align*}
\end{lemma}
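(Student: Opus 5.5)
The plan is to substitute the specific choice of $G$ from~\eqref{eq:bilinear-G-def} into the normal basis~\eqref{eq:normal-basis}, and then read off the model from Theorem~\ref{t:optimal-approx-Kaug}. The lemma has three parts: that this gives an admissible normal basis, the function form, and the point-wise predictor. We handle them in that order.

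\textbf{Admissibility.} With $G(u) = [u_1 I, \ldots, u_m I]^T$ we get
\begin{align*}
\Psi(x,u) = \begin{bmatrix} H(x) \\ u_1 H(x) \\ \vdots \\ u_m H(x) \end{bmatrix},
\end{align*}
so $\Psi$ has the required decomposition. Linear independence of its elements is part of the non-degeneracy hypothesis, i.e., the lemma's premise of being in normal form. We would only remark that it holds, e.g., when $H$ is linearly independent and the functions $1, u_1, \ldots, u_m$ are linearly independent on $\Uc$, since $\Psi$ then consists of tensor products of two independent families.

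\textbf{Function form.} Part (b) of Theorem~\ref{t:optimal-approx-Kaug} gives $H \circ \Tc \bestapprox{\Sc} (A_{11} I_{\Uc} + A_{12} G) H$. Partition $A_{12} = [B_1, \ldots, B_m]$ conformally with the $m$ blocks of $G$, each of size $n_{\Hc}\times n_{\Hc}$. Block multiplication, applied pointwise in $u$, then gives
\begin{align*}
A_{12} G(u) = \sum_{i=1}^m B_i (u_i I) = \sum_{i=1}^m u_i B_i .
\end{align*}
Substituting this yields the stated approximation $(A_{11} I_{\Uc} + \sum_i u_i B_i) H$.

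\textbf{Point-wise predictor.} Here $\Ac(u) = A_{11} + \sum_i u_i B_i$, and~\eqref{eq:input-state-separable-pointwise} becomes $z^+ = A_{11} z + \sum_i u_i B_i z$ with $z_0 = H(x_0)$.

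The only delicate point is bookkeeping. The elementwise product convention for the matrix-valued function $G$ must be used, and the transpose in~\eqref{eq:bilinear-G-def} must be read as stacking the blocks $u_i I$ vertically, rather than as a literal transpose of a block row. Once this is fixed, everything is a direct computation.
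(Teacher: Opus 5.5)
Your proposal is correct and is the direct computation the paper refers to. You substitute $G$ into part (b) of Theorem~\ref{t:optimal-approx-Kaug}, use $A_{12}G(u)=\sum_{i=1}^m u_i B_i$, and read off $\Ac(u)=A_{11}+\sum_{i=1}^m u_i B_i$. One small note: your caveat about the transpose is unnecessary, since $(u_i I)^T = u_i I$, so the literal transpose of the block row already is the vertical stack.
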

\smallskip
The proof follows by direct computation. Next, we explain how Theorem~\ref{t:rrmse-bound} applies to lifted linear and bilinear forms.

\begin{corollary}\longthmtitle{Theorem~\ref{t:rrmse-bound} Applies to Lifted Linear and Bilinear Forms}
Compute $M_{CC}$ in Definition~\ref{def:Mcc} via the additional structure imposed in Lemma~\ref{l:lifted-linear-basis} or Lemma~\ref{l:lifted-bilinear-basis} on basis $\Psi$. Then, Theorem~\ref{t:rrmse-bound}  applies to $\Pf_{h \circ \Tc}$ (which in this case has an added linear or bilinear structure). \oprocend
\end{corollary}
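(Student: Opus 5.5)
The corollary is a specialization of Theorem~\ref{t:rrmse-bound}, so my plan is to check that the bases in Lemma~\ref{l:lifted-linear-basis} and Lemma~\ref{l:lifted-bilinear-basis} satisfy every hypothesis of that theorem. Once they do, the theorem applies verbatim. Its proof used only two ingredients: the normal structure~\eqref{eq:normal-basis}, which makes $H$ the top block of $\Psi$ so that Lemma~\ref{l:A-top} gives $[A_{11},A_{12}] = H(X^+)\Psi(X,U)^\dagger$, and Assumption~\ref{a:rank}. No property of $G$ beyond being a matrix-valued function on $\Uc$ was used.

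The first step is to confirm that both choices of $G$ are legitimate normal bases. For the linear case, $G(u) = [0_{m\times \dim(\bar H)}, u]$ is $m \times n_{\Hc}$ with $n_{\Hc} = \dim(\bar H)+1$. Then $G H = u\,1_{\Xc}$, and
\begin{align*}
\Psi(x,u) = \begin{bmatrix} H(x) \\ u \end{bmatrix}.
\end{align*}
For the bilinear case, $G(u) = [u_1 I,\ldots,u_m I]^T$ gives
\begin{align*}
\Psi = \begin{bmatrix} H \\ u_1 H \\ \vdots \\ u_m H \end{bmatrix}.
\end{align*}
Both have the form~\eqref{eq:normal-basis}. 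The linear independence required in Definition~\ref{def:normal-space} follows from Assumption~\ref{a:rank}: if $\Psi(X,U)$ has full row rank, the elements of $\Psi$ are linearly independent. So I simply impose Assumption~\ref{a:rank} on these specific dictionaries.

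The second step is to note that $M_{CC}$ in Definition~\ref{def:Mcc} is computed from $J = H(X^+)$ and $L = \Psi(X,U)$, with $\Psi$ as above. Theorem~\ref{t:optimal-approx-Kaug}, together with Lemma~\ref{l:lifted-linear-basis} or Lemma~\ref{l:lifted-bilinear-basis}, identifies the predictor $\Pf_{h\circ\Tc} = v_h^*(A_{11} I_{\Uc} + A_{12}G)H$ with the lifted linear or bilinear predictor:
\begin{align*}
\text{linear: } & v_h^*\big(A_{11}H + A_{12}u\big), \\
\text{bilinear: } & v_h^*\Big(A_{11} + \textstyle\sum_i u_i B_i\Big)H.
\end{align*}
On the data this predictor equals $v_h^* J L^\dagger L$, which is exactly the quantity appearing in~\eqref{eq:rrmse-squared}.

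The conclusion $\RRMSE_{\max} = \sqrt{\Ic_{CC}}$ then follows directly from Theorem~\ref{t:rrmse-bound}. The only point needing care, and the main obstacle, is the rank assumption. In the linear case, $\Psi(X,U)$ must have full row rank including the rows of $U$, which requires input data that are not collinear with the state features. In the bilinear case, all $m\,n_{\Hc}$ product rows $u_i H(X)$ must be independent, which needs $N \geq (m+1)n_{\Hc}$ samples with sufficiently rich inputs. I would state this explicitly as the hypothesis under which the corollary holds.
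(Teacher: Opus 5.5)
Your proposal is correct and follows the reasoning the paper relies on. The paper gives no separate proof: it treats the corollary as immediate, because Lemmas~\ref{l:lifted-linear-basis} and~\ref{l:lifted-bilinear-basis} exhibit the lifted linear and bilinear dictionaries as particular normal bases of the form~\eqref{eq:normal-basis}, so Theorem~\ref{t:rrmse-bound} (whose proof uses only the normal structure, Lemma~\ref{l:A-top}, and Assumption~\ref{a:rank}) applies verbatim. Your explicit remark that Assumption~\ref{a:rank} must hold for these specific dictionaries is a useful clarification the paper leaves implicit; in the bilinear case this means all rows of $\Psi(X,U)$ must be jointly independent, which in particular needs $N \geq (m+1)n_{\Hc}$.
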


\section{Optimization-Based Model Learning}\label{sec:learning}
Given a fixed subspace $\Span(\Psi)$ and data matrices $X, X^+, U$, Theorem~\ref{t:optimal-approx-Kaug} provides the optimal KCF model and predictors, while Theorem~\ref{t:rrmse-bound} establishes a tight worst-case bound for these predictors. To find a better model given a fixed dataset, one must change $\Psi$, which leads to the following question: how should we choose the normal basis $\Psi$ to obtain a model with low RRMSE error?

The control consistency index is a natural cost function for optimization-based schemes. Let $\Psi^{\phi, \theta}$ represent a parametric family (e.g., neural networks, etc) of normal bases in the form~\eqref{eq:normal-basis} with components $H^\phi: \Xc \to \real^{n_\Hc}$ and $G^{\theta}: \Uc \to \real^{(n_\Psi - n_\Hc) \times n_\Hc}$, for parameters $\phi$ and $\theta$. The corresponding optimization is equivalent to a minimax problem
\begin{align}\label{eq:robust-optimization}
\min_{\substack{\phi, \theta}} \sqrt{\Ic_{CC}^{{\phi, \theta}}}  \; \Leftrightarrow \; \min_{\substack{\phi, \theta}} \max_{\substack{h \in \Hc^\phi \\ h \neq 0}}  \frac{\| h \circ \Tc - \Pf_{h \circ \Tc}^{\phi,\theta} \|_{L_2 (\mu_{Z})}}{\| h \circ \Tc \|_{L_2 (\mu_{Z})}},
\end{align}
where $\Ic_{CC}^{{\phi, \theta}}$ and $\Pf_{h \circ \Tc}^{\phi,\theta}$ are the control consistency index and KCF predictor for basis $\Psi^{\phi, \theta}$. Optimization~\eqref{eq:robust-optimization} considers all functions $h \in \Hc^\phi$ and is insensitive to the choice of basis. 

\textit{Numerical Sensitivity and Computational Efficiency:}
Computing the maximum eigenvalue of a matrix is computationally intensive 
and sensitive to numerical errors when the gap between eigenvalues is small.
To address this, we use $\tr(M_{CC})$ as a proxy. From Proposition~\ref{p:Mcc-properties}(b), we have
\begin{align*}
0 \leq \frac{1}{n_\Hc} \tr(M_{CC}) \leq \Ic_{CC} \leq \tr(M_{CC}).
\end{align*}
Hence, $\tr(M_{CC})$ is equivalent to $\Ic_{CC}$ up to a multiplicative constant and can be used as a cheap surrogate of $\Ic_{CC}$. Moreover, similarly to $\Ic_{CC}$, $\tr(M_{CC})$ is also invariant under the choice of normal basis directly following Proposition~\ref{p:consistency-spectrum-invariance}.

\begin{corollary}\longthmtitle{$\tr(M_{CC})$ is Invariant under the Choice of Normal Basis}
Consider $M_{CC}$ and $\bar{M}_{CC}$ as defined in Proposition~\ref{p:consistency-spectrum-invariance}. Then, $\tr(M_{CC}) = \tr(\bar{M}_{CC})$. \oprocend
\end{corollary}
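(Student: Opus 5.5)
The corollary follows directly from Proposition~\ref{p:consistency-spectrum-invariance}. I would argue through the similarity relation established in its proof rather than only through the equality of spectra, because the similarity gives equal traces immediately.

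First, let $\Psi$ and $\bar{\Psi}$ be the two normal bases of $\Sc$. Write $\bar{\Psi} = R \Psi$ with $R$ nonsingular and partitioned as in~\eqref{eq:R-decompose2}. By Lemma~\ref{l:change-normal-bases}, $R_{12}=0$ and $R_{11}$ is invertible, so $\bar{H}(X^+) = R_{11} H(X^+)$.

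Second, I would recall the two facts already used in that proof. The projector $\bar{L}^\dagger \bar{L}$ equals $L^\dagger L$, since both are the orthogonal projection onto the same row space. Under Assumption~\ref{a:rank}, $(R_{11}J)^\dagger = J^\dagger R_{11}^{-1}$. Together these give $\bar{M}_{CC} = R_{11} M_{CC} R_{11}^{-1}$.

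Finally, the cyclic property of the trace yields
\[
\tr(\bar{M}_{CC}) = \tr(R_{11} M_{CC} R_{11}^{-1}) = \tr(M_{CC} R_{11}^{-1} R_{11}) = \tr(M_{CC}).
\]
There is an equivalent route: the trace is the sum of the eigenvalues counted with algebraic multiplicity, and similar matrices have identical characteristic polynomials.

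The only point needing care is that ``$\spec$'' in Proposition~\ref{p:consistency-spectrum-invariance} is stated as a set, and equal sets would not by themselves preserve multiplicities. Working directly from the similarity $\bar{M}_{CC} = R_{11} M_{CC} R_{11}^{-1}$ avoids this issue. Everything else is routine.
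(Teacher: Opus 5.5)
Your proposal is correct and follows the paper's route. The paper presents the corollary as a direct consequence of Proposition~\ref{p:consistency-spectrum-invariance}, whose proof establishes exactly the similarity $\bar{M}_{CC} = R_{11} M_{CC} R_{11}^{-1}$ that you invoke, and your remark is well placed: since the spectrum is written as a set, citing the similarity (rather than set equality of spectra) is what guarantees that multiplicities, and hence the trace, are preserved.
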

While both $\Ic_{CC}$ and $\tr(M_{CC})$ are invariant under the choice of basis\footnote{It is tempting to use $\| M_{CC}\|_F$ as a loss function; however, the Frobenius norm is not invariant under the choice of basis.},  computational accuracy (especially in low precision deep learning models) depends on the matrices' condition number. Thus, it is useful to add a small penalty on the condition number of $\Psi^{\phi,\theta}(X,U)$ and $H^\phi(X^+)$.

\textit{Differences with Robust Learning in~\cite{MH-JC:26-auto}:}
The approach in~\cite{MH-JC:26-auto} applies forward-backward consistency for systems without input~\cite{MH-JC:23-csl} to $\Kaug$ (cf.~\eqref{eq:Kaug}), yielding a conservative prediction error bound~\cite[Theorem~8.7(d)]{MH-JC:26-auto}, while the bound in Theorem~\ref{t:rrmse-bound} is tight. Moreover, learning in~\cite{MH-JC:26-auto} relies on the invariance proximity of $\Span(\Psi^{\phi, \theta})$ under $\Kaug$, whereas this work only requires predictor accuracy for $h \in \Hc$. This difference permits optimizing over much smaller subspaces and using smaller parametric families.

\section{Simulations}
Consider the DC motor adopted from~\cite{MH-JC:26-auto}\footnote{A simpler version of this example has been used in~\cite{MK-IM-automatica:18} which was adopted from the experimental study~\cite{SDB-HU:98}.}
\begin{align}\label{eq:DC-motor}
\dot{x}_1 &= -39.3153\,x_1 - 0.805732\,x_2 f(u) + 191.083, \nonumber\\
\dot{x}_2 &= -1.65986\,x_2 + 57.3696\,x_1 f(u) - 333.333,
\end{align}
where $x=[x_1,x_2]^T \in \Xc =[-5,15] \times [-250,125]$ and $u \in \Uc= [-2,2]$. This system is specifically chosen to evaluate how different input nonlinearities impact the accuracy of input-state separable, lifted linear, and bilinear models. To this end, we study two input terms: $f(u) = 2\tanh(u)$ and $f(u) = 2 \tanh(u \cos(u))$. All models are learned for the discretization of~\eqref{eq:DC-motor} with a sampling time of $\Delta t = 5$ms.

\textit{Data:} We sample data (with $\Delta t$) from one $50$s experiment starting at $x = 0$. We apply a piecewise-constant sequence of inputs that are randomly drawn from $\Uc$ and held constant for $0.2$s. This yields $10^4$ snapshots in data matrices $X^+, X, U$, which we divide into $80\%$ training and $20\%$ test sets.

\textit{Parametric Families:} Following Section~\ref{sec:learning}, we set $n_\Hc = 5$ and $n_\Psi = 10$. The function $H^\phi: \Xc \to \real^{n_\Hc}$ is a feedforward neural network with four hidden layers, each containing $64$ neurons with exponential linear unit (ELU) activations. All layers are equipped with layer normalization~\cite{JLB-JRK-GEH:16}. We fix the first two output elements of $H^\phi$ to be the system states. The network $G^\theta: \Uc \to \real^{(n_\Psi - n_\Hc) \times n_\Hc}$ shares the exact same hidden-layer architecture. To learn the lifted linear and bilinear models, we retain $H^\phi$ while fixing the structure of $G$ according to Lemmas~\ref{l:lifted-linear-basis} and~\ref{l:lifted-bilinear-basis}, respectively.

\textit{Loss Function:} Following Section~\ref{sec:learning}, we define our loss function as $\tr(M_{CC}^{\phi, \theta}) + \alpha \; \text{cond}(\Psi^{\phi, \theta}(X,U))$ with $\alpha = 10^{-5}$.

\textit{Training:} We train the $\Psi^{\phi, \theta}$ comprised of $H^\phi$ and $G^\theta$ over batches of size $n_b =100$ via Adam~\cite{DPK-JB:15} with $\beta=(0.9,0.999)$ for 500 epochs. We warm up the learning rate linearly from $10^{-7}$ to $10^{-3}$ during the first 50 epochs, then anneal the rate back to $10^{-7}$ via a cosine schedule thereafter.

\begin{remark}\longthmtitle{Practical Advice}
We offer two implementation guidelines: (i) choose a batch size larger than $n_\Psi$ to ensure the rank condition in Assumption~\ref{a:rank} holds for each batch; (ii) avoid batch normalization--it changes the underlying subspaces per batch, resulting in errors. Layer normalization can be used if needed. \oprocend
\end{remark}

\textit{Results:} Tables~\ref{tab:training-metrics-DC-tanh} and~\ref{tab:training-metrics-DC-tanhcos} show the worst-case errors on the training and test data, computed via Theorem~\ref{t:rrmse-bound}, for the cases $f(u) = 2 \tanh(u)$ and $f(u) = 2 \tanh(u \cos(u))$ in~\eqref{eq:DC-motor}. 
As expected, in both cases, the input-state separable model is significantly more accurate on both the training and test data. 
The bilinear model performs much better for $f(u) = 2 \tanh(u)$ than for the more complex nonlinearity. This disparity arises from the fact that $f(u)=2\tanh(u)$ can be reasonably approximated by a linear map over $\Uc$, rendering the system nearly control-affine, whereas this is not the case for $f(u) = 2 \tanh(u \cos(u))$.

\begin{table}[tb]
	\centering
	\caption{$\RRMSE_{\max}$ for different models given $f(u)= 2 \tanh(u)$ on training and test data gathered from~\eqref{eq:DC-motor}.}
	\label{tab:training-metrics-DC-tanh}
	\begin{tabular}{l
			S[table-format=1.6]
			S[table-format=1.6]}
		\toprule
		\textbf{Model} & {\textbf{$\boldsymbol{\RRMSE_{\max}}$ (train)}} & {\textbf{$\boldsymbol{\RRMSE_{\max}}$ (test)}} \\
		\midrule
		Input-State Separable & 0.002451 & 0.002253 \\
		Lifted Bilinear       & 0.026693 & 0.026992 \\
		Lifted Linear         & 0.078716 & 0.077355 \\
		\bottomrule
	\end{tabular}
\end{table}
\begin{table}[tb]
	\centering
	\caption{$\RRMSE_{\max}$ for different models given $f(u)= 2 \tanh(u \cos(u))$ on training and test data gathered from~\eqref{eq:DC-motor}.}
	\label{tab:training-metrics-DC-tanhcos}
	\begin{tabular}{l
			S[table-format=1.6]
			S[table-format=1.6]}
		\toprule
		\textbf{Model} & {\textbf{$\boldsymbol{\RRMSE_{\max}}$ (train)}} & {\textbf{$\boldsymbol{\RRMSE_{\max}}$ (test)}} \\
		\midrule
		Input-State Separable & 0.004321 & 0.004658 \\
		Lifted Bilinear       & 0.060323 & 0.063704 \\
		Lifted Linear         & 0.081267 & 0.084603 \\
		\bottomrule
	\end{tabular}
    \vskip -0.15 true in
\end{table}

To quantify the point-wise dynamic predictor~\eqref{eq:input-state-separable-pointwise} accuracy and its linear and bilinear special cases, we generate 1000 trajectories of 200 time steps (1s). Initial conditions are sampled from test data, and piecewise constant input values are drawn from $\Uc$. Figure~\ref{fig:DC-motor-long-term-prediction} shows the median and the $25^{th}$--$75^{th}$ percentile range of the relative prediction error for each state variable.  The input-state separable model provides accurate, low-variance predictions for both input nonlinearities:  it successfully captures the true system dynamics. In contrast, while the lifted bilinear model shows reasonable short-horizon accuracy for $f(u)=2\tanh(u)$, the linear model produces inaccurate, high-variance predictions due to missing state-input cross-terms. Furthermore, when $f(u) = 2 \tanh(u \cos(u))$, both lifted linear and bilinear models fail to provide accurate low-variance predictions, implying that strong input nonlinearities require the input-state separable form over its special linear and bilinear cases.

\begin{figure}[tbh]
	\centering 
	{\includegraphics[width=.49\linewidth]{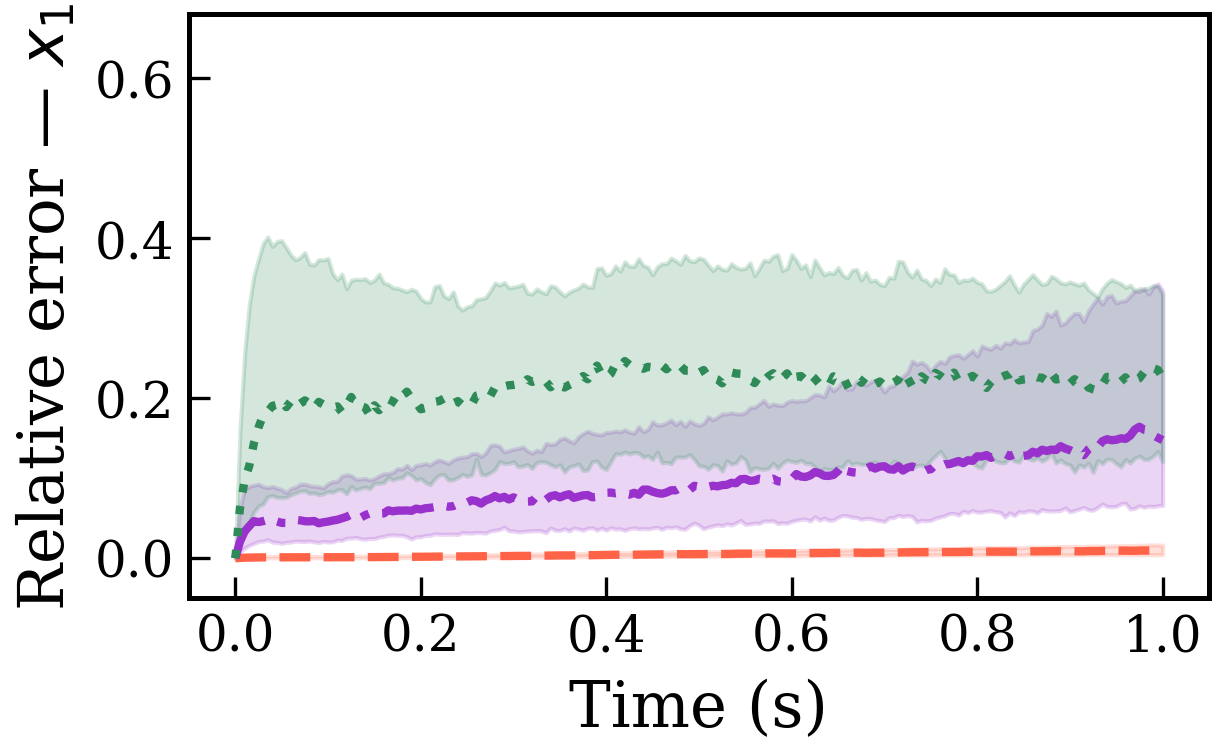}}
	{\includegraphics[width=.49\linewidth]{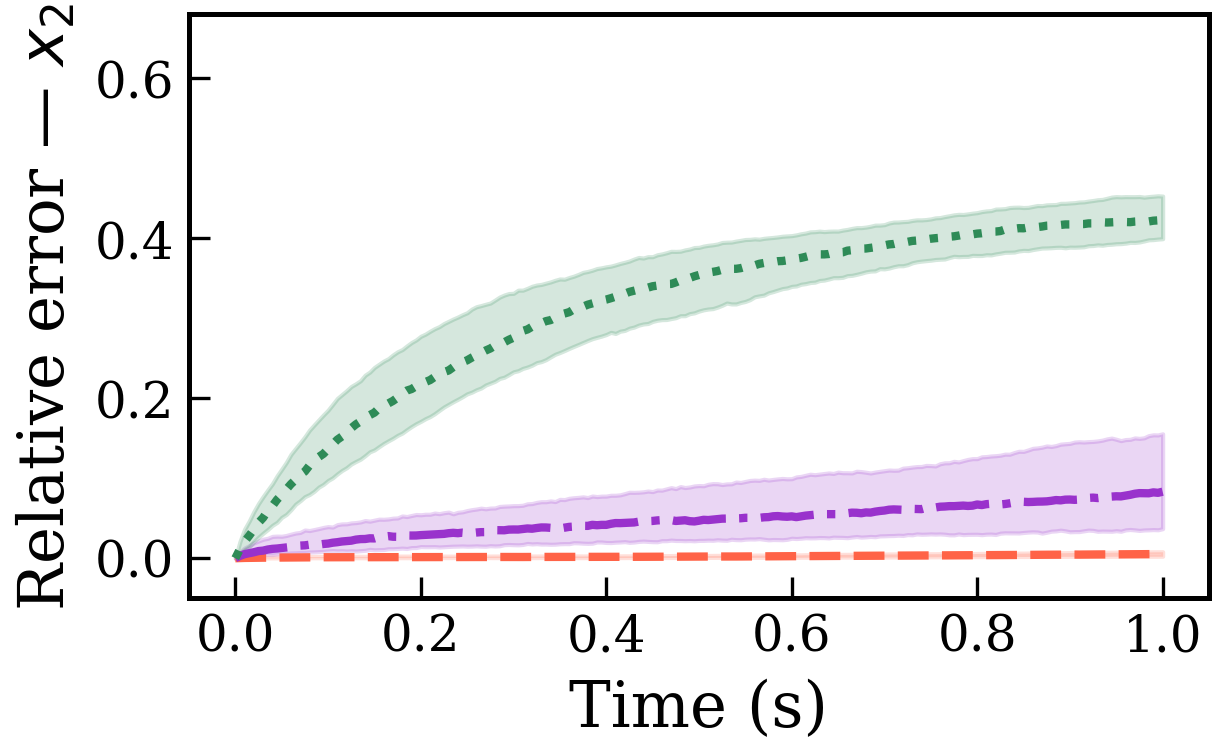}}
	\\
	{\includegraphics[width=.48\linewidth]{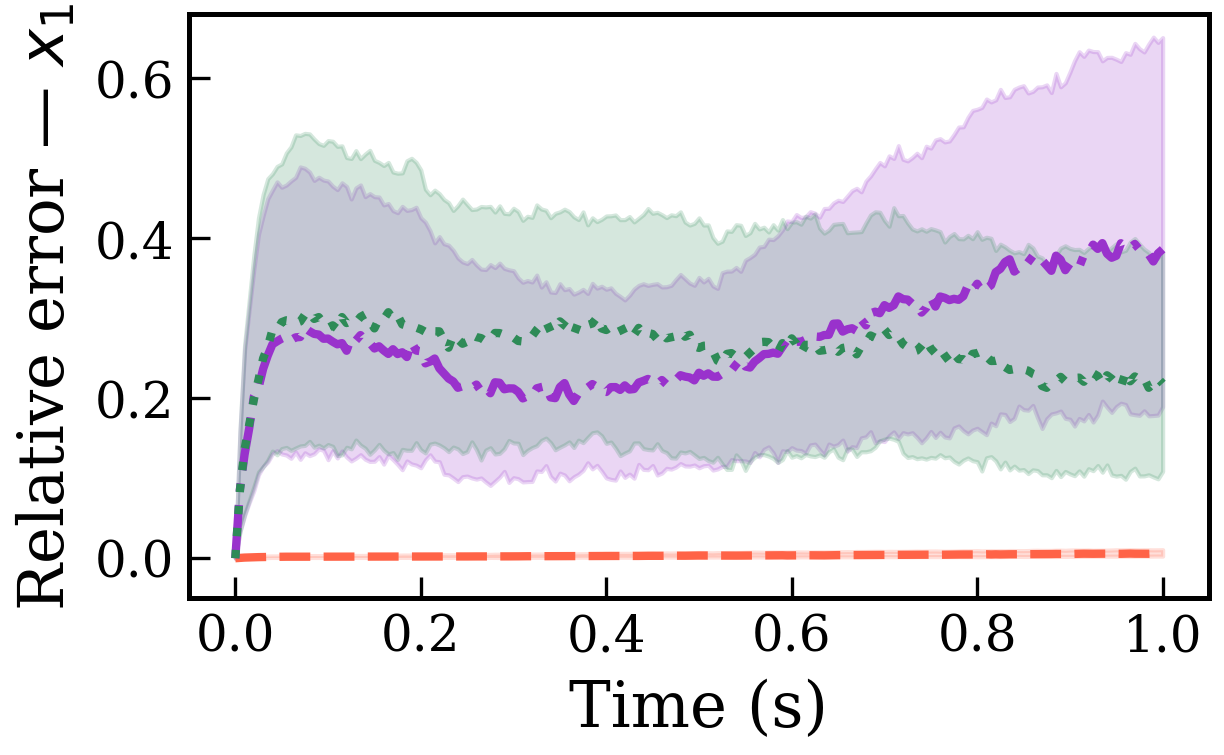}}
	{\includegraphics[width=.49\linewidth]{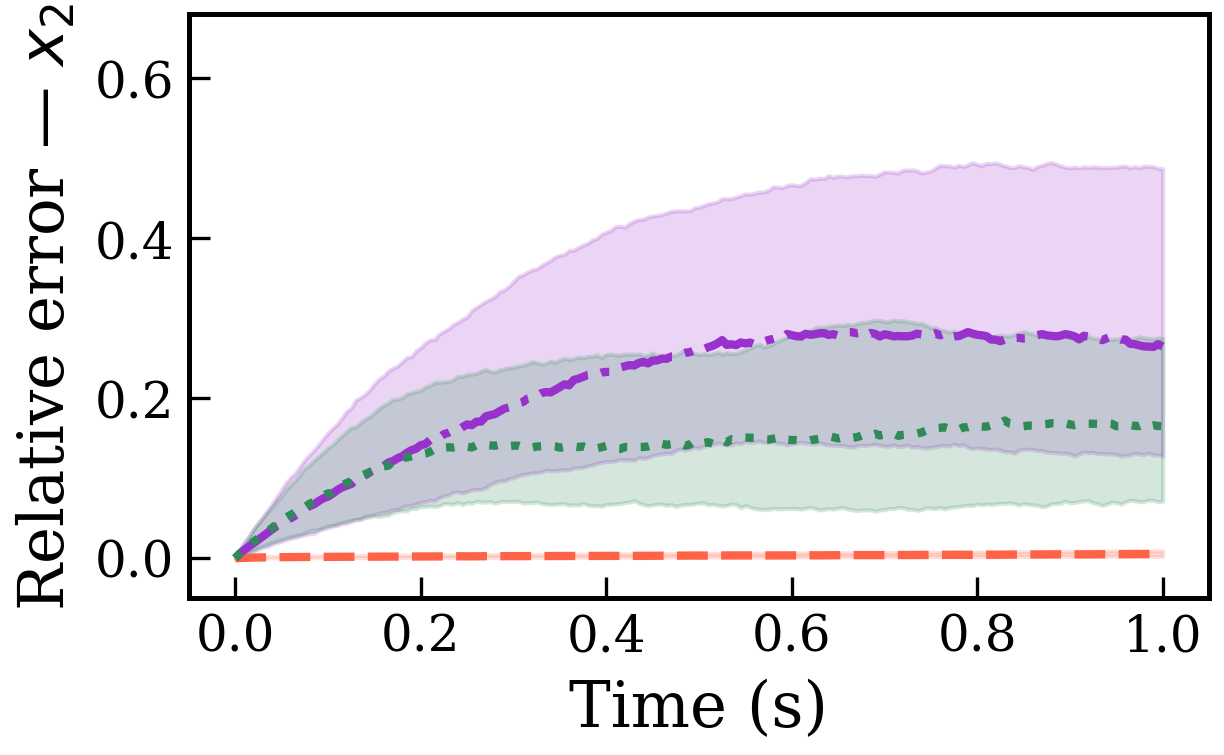}}
    \\
    {\includegraphics[width=.99\linewidth]{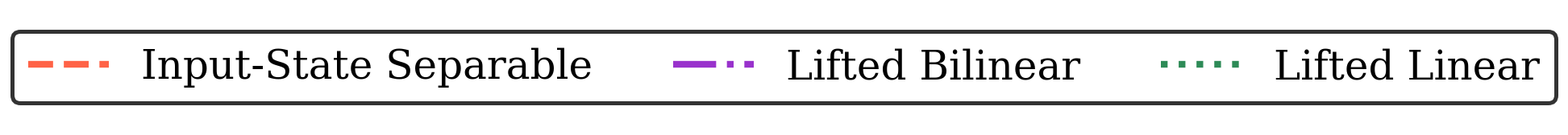}}
	\caption{Performance comparison between the input-state separable predictor and its linear/bilinear subcases. Plots show the median relative prediction error and 25th--75th percentile bounds over 1000 trajectories for states $x_1$ (left) and $x_2$ (right). Results are evaluated under two input nonlinearities: $f(u) = 2 \tanh(u)$ (top row) and $f(u) = 2 \tanh(u \cos(u))$ (bottom row).}\label{fig:DC-motor-long-term-prediction}
	\vspace*{-2ex}
\end{figure}

\section{Conclusions}
We extended the notion of consistency index to control systems, providing a computationally tractable measure of accuracy for data-driven models associated with the Koopman Control Family (KCF). We demonstrated that the control forward-backward consistency matrix possesses several desirable properties, including a real spectrum bounded within $[0,1]$ that is invariant under the choice of basis for the underlying vector spaces. Our main theoretical contribution is a sharp, closed-form upper bound on the RRMSE of KCF function predictors, determined precisely by the square root of the maximum eigenvalue of the control consistency matrix. We further showed how this bound can be specialized and applied to widely used
lifted linear and bilinear models. Finally, we demonstrated how our framework can be practically incorporated into optimization-based learning. 

\smallskip
\bibliographystyle{IEEEtran}%
\bibliography{refs}

\setcounter{section}{0} \renewcommand{\thesection}{\Alph{section}}
\section{Appendix}
\begin{lemma}\longthmtitle{Change of Non-degenerate Normal Bases}\label{l:change-normal-bases}
Given a normal space $\Sc$, let $\Psi$ and $\bar{\Psi}$ be two non-degenerate normal bases (cf.~\eqref{eq:normal-basis}) for it, and let the non-singular matrix $R \in \cplx^{\dim(\Sc) \times \dim(\Sc)}$ be the change of basis matrix satisfying $\bar{\Psi} = R \Psi$.
Given the following decompositions with appropriate dimensions
\begin{align}\label{eq:normal-basis-decomposition}
\Psi = \begin{bmatrix} I_{\Uc} \\ G \end{bmatrix} H, \quad
\bar{\Psi} = \begin{bmatrix} I_{\Uc} \\ \bar{G} \end{bmatrix} \bar{H}, \quad R= \begin{bmatrix} R_{11} & R_{12} \\ R_{21} & R_{22} \end{bmatrix},
\end{align}
we have $R_{12} = 0$, $R_{11}$ is invertible, and $\bar{H} = R_{11} H$.
\end{lemma}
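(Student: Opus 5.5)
Everything follows from the two normal-form decompositions and linear independence of the elements of $\Psi$; no data or inner product is needed. Write $\bar{\Psi} = R\Psi$ in blocks, so the top block gives
\begin{align*}
\bar{H} = R_{11} H + R_{12}\, G H,
\end{align*}
as functions of $(x,u)$. The left side depends only on $x$. The plan is to show that the elements of $GH$ have no component that depends only on $x$ relative to $\Span(H)$, which will force $R_{12}=0$.

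\textbf{Key step (showing $R_{12}=0$).} Fix an arbitrary $u_0 \in \Uc$ and evaluate the identity at $u_0$. This gives $\bar{H}(x) = (R_{11} + R_{12} G(u_0)) H(x)$ for all $x$. Subtract this from the general identity to obtain
\begin{align*}
R_{12}\,(G(u) - G(u_0)) H(x) = 0 \quad \text{for all } (x,u).
\end{align*}
Rewrite this as a linear relation among the elements of $\Psi$. Since $G(u_0)H = G(u_0)\, I_{\Uc} H$ is a constant-matrix combination of the top block $I_{\Uc} H$, the relation reads
\begin{align*}
[-R_{12} G(u_0),\ R_{12}]\,\Psi \equiv 0.
\end{align*}
Linear independence of the elements of $\Psi$ then forces every row of the coefficient matrix to vanish, so in particular $R_{12}=0$.

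\textbf{Remaining claims.} With $R_{12}=0$, the top block identity becomes $\bar{H} = R_{11}H$. To show $R_{11}$ is invertible, suppose $v^* R_{11} = 0$ for some $v \neq 0$. Then $v^*\bar{H} = 0$, which contradicts linear independence of the elements of $\bar{\Psi}$, since $\bar{H}$ is the first block of $\bar{\Psi}$. Hence $R_{11}$ is square and nonsingular. Square dimensions also follow from counting: $R$ nonsingular and block lower-triangular forces the block sizes to match, i.e., $n_{\Hc} = n_{\bar{\Hc}}$. This can also be obtained by symmetry, applying the same argument to $R^{-1}$.

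\textbf{Expected obstacle.} The only delicate point is justifying that $G(u_0)H$ lies in the span of the top block of $\Psi$. This is exactly where the identity structure $I_{\Uc}$ of the normal form is used, and it must be phrased carefully because $\Psi$ is a function of $(x,u)$ while $H$ depends on $x$ alone.
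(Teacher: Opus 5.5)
Your argument is correct and is essentially the paper's. Both proofs turn the top-block identity $\bar{H} = R_{11}H + R_{12}GH$ into a vanishing linear combination of the elements of $\Psi$ whose $GH$-coefficient is $R_{12}$, and then invoke linear independence; the only difference is that your evaluation at a fixed $u_0$ explicitly produces the representation of $\bar{H}$ in terms of $H$, which the paper instead takes from the asserted identity $\Span(I_{\Uc}H)=\Span(I_{\Uc}\bar{H})$.

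One correction: your ``counting'' remark is false. A nonsingular $R$ with $R_{12}=0$ only forces $n_{\bar{\Hc}}\le n_{\Hc}$ (for instance, $R=I$ with row split $n_{\bar{\Hc}}$ and column split $n_{\Hc}>n_{\bar{\Hc}}$), and your left-null-space argument gives only the same inequality. Squareness of $R_{11}$ therefore genuinely rests on the symmetric argument applied to $R^{-1}$, which you do supply, so the proof is complete.
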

 \begin{proof}
 Given the equality $\bar{\Psi} = R \Psi$ and the decompositions in~\eqref{eq:normal-basis-decomposition}, one can write
 \begin{align}\label{eq:top-block-expansion}
 I_{\Uc} \bar{H} = R_{11} I_{\Uc} H + R_{12} G H.
 \end{align}
 Now, consider an arbitrary function
 \begin{align*}
 h \in \Span(I_{\Uc} H) = \Span(I_{\Uc} \bar{H})
 \end{align*}
 with specific representations
 \begin{align}\label{eq:h-representation}
 h = v_{h}^* I_{\Uc} H = w_{h}^* I_{\Uc} \bar{H},
 \end{align}
 with $v_{h}, w_{h} \in \cplx^{\dim(\Hc)}$.
 By multiplying both sides of~\eqref{eq:top-block-expansion} from the left with $w_{h}^*$, one can write
 \begin{align*}
 w_{h}^* I_{\Uc} \bar{H} = w_{h}^* R_{11} I_{\Uc} H + w_{h}^* R_{12} G H.
 \end{align*}
 However, using~\eqref{eq:h-representation} with the equation above yields
 \begin{align*}
 (w_{h}^* R_{11} - v_{h}^*) I_{\Uc} H + w_{h}^* R_{12} G H = 0.
 \end{align*}
 The previous equation can be written in a compact form as
 \begin{align}\label{eq:compact-independence}
 \begin{bmatrix} w_{h}^* R_{11} - v_{h}^*, & w_{h}^* R_{12} \end{bmatrix} \Psi = 0.
 \end{align}
 However, since $\Psi$ is a basis, the linear independence of its elements implies $[w_{h}^* R_{11} - v_{h}^*,  w_{h}^* R_{12}] = 0$; hence
 \begin{align}\label{eq:w-orthogonality}
 w_{h}^* R_{12} = 0.
 \end{align}
 Note that~\eqref{eq:w-orthogonality} holds for all $h \in \Span(I_{\Uc} H) = \Span( I_{\Uc} \bar{H})$.
 That implies $w^* R_{12} = 0$ for all $w^* \in \cplx^{\dim(\Hc)}$.
 Hence, $R_{12} = 0$.
 The invertibility of $R_{11}$ directly follows from that $R$ is invertible and is block triangular. The rest follows from the fact that $R_{11} I_{\Uc} (u) = R_{11}$ for all $u \in \Uc$.
 \end{proof}

\end{document}